\documentclass[a4paper]{amsart}

\usepackage[utf8]{inputenc}
\usepackage[english]{babel}

\usepackage[T1]{fontenc}
\usepackage{lmodern}  

\usepackage{my-math-symbol}
\usepackage{my-cleveref}
\usepackage{my-equation-numbering}
\usepackage{tikz}
\usetikzlibrary{cd}

\theoremstyle{plain}
\newtheorem{keyassump}{Assumption}
\newenvironment{keyassumption}[1]{\renewcommand \thekeyassump{#1} \keyassump}{\endkeyassump}
\crefname{keyassumption}{Assumption}{Assumption}

\DeclareMathOperator{\CSC}{CSC}
\DeclareMathOperator{\GModuli}{\calM}
\newcommand{\CC}{\mathrm{CC}}
\newcommand{\Hermform}{\begin{pmatrix} - 1 & 0 \\ 0 & I_{n + 1} \end{pmatrix}}

\usepackage[non-sorted-cites,initials,alphabetic,nobysame]{amsrefs}

\AtBeginDocument{%
	\def\MR#1{}
}

\title[Non-homothetic complete periodic contact forms]{Non-homothetic complete periodic contact forms with constant Tanaka--Webster scalar curvature}

\author{Jeffrey S. Case}
\address{Department of Mathematics \\ The Pennsylvania State University
	\\ University Park, PA 16802, USA}
\email{jscase@psu.edu}

\author{Yuya Takeuchi}
\address{Division of Mathematics \\ Institute of Pure and Applied Sciences \\ University of Tsukuba
	\\ Tsukuba, Ibaraki 305-8571, Japan}
\email{ytakeuchi@math.tsukuba.ac.jp, yuya.takeuchi.math@gmail.com}

\subjclass[2020]{32V20, 35B10, 35J20, 35J20, 35H20, 53C21}

\keywords{CR geometry, singular CR Yamabe problem, constant Tanaka--Webster scalar curvature, complex Kleinian groups}

\thanks{J.S.C. was supported by the National Science Foundation under Award \#DMS-2505606. Y.T. was supported by JSPS KAKENHI Grant Number JP25K17247.}

\begin{document}

\begin{abstract}
	We study the existence problem for complete contact forms with constant Tanaka--Webster scalar curvature
	on non-compact strictly pseudoconvex CR manifolds.
	We prove that,
	under mild assumptions,
	the universal cover of a compact strictly pseudoconvex CR manifold admits
	infinitely many non-homothetic such contact forms
	whenever its fundamental group has infinite profinite completion.
	As applications,
	we treat complements of real or complex spheres in the standard CR sphere,
	as well as circle bundles over compact K\"{a}hler manifolds and the boundary of a Reinhardt domain.
\end{abstract}

\maketitle

\section{Introduction}
\label{sec:Introduction}

The existence of complete metrics with constant scalar curvature
is one of the most fundamental problems in Riemannian and conformal geometry.
In the compact case,
this problem, known as the Yamabe problem, was solved affirmatively;
see \cite{Lee-Parker1987} and the references therein.
A typical non-compact setting arises
when one considers the complement of a closed subset in a compact manifold.
In this situation,
the problem is referred to as the singular Yamabe problem.
If the ambient manifold is the round sphere,
this problem has been studied by several authors%
~\cites{Schoen1989,Mazzeo-Smale1991,Bettiol-Piccione-Santaro2016,Bettiol-Piccione2018}.
Related to our result,
in joint work with Andrade, Piccione, and Wei~\cite{Andrade-Case-Piccione-Wei2025-preprint},
the first author proved that
$S^{n} \setminus S^{k}$ admits infinitely many non-homothetic periodic conformally round metrics
with positive constant scalar curvature provided that $n \geq 2 k + 3$.

Based on the analogy between conformal and CR geometry,
we consider the existence problem for complete contact forms with constant Tanaka--Webster scalar curvature.
We first recall the status of this problem
on compact, connected, strictly pseudoconvex CR manifolds $(M, T^{1, 0} M)$ of dimension $2 n + 1$.
The \emph{CR Yamabe functional} $\frakF$ is defined by
\begin{equation}
\label{eq:CR-Yamabe-functional}
	\frakF(\theta)
	\coloneqq \Vol_{\theta}(M)^{- n / (n + 1)} \int_{M} R_{\theta} \, \vol_{\theta},
\end{equation}
where $\theta$ is a contact form on $M$,
$R_{\theta}$ is the Tanaka--Webster scalar curvature associated with $\theta$,
$\vol_{\theta} \coloneqq \theta \wedge (d \theta)^{n}$,
and $\Vol_{\theta}(M) \coloneqq \int_{M} \vol_{\theta}$.
The infimum of this functional is called the \emph{CR Yamabe constant} $Y(M, T^{1, 0} M)$.
A critical point of this functional corresponds to a contact form with constant Tanaka--Webster scalar curvature.
Jerison and Lee~\cites{Jerison-Lee1987,Jerison-Lee1988,Jerison-Lee1989} proved that
this functional admits a minimizer if $n \geq 2$ and $(M, T^{1, 0} M)$ is not spherical,
or if $(M, T^{1, 0} M)$ is CR diffeomorphic to the CR sphere.
Gamara and Yacoub~\cites{Gamara-Yacoub2001,Gamara2001} established the existence of a critical point
in the remaining case via the method of critical points at infinity.
The contact form constructed by their method is not necessarily a CR Yamabe minimizer.
In fact,
Cheng, Malchiodi, and Yang~\cite{Cheng-Malchiodi-Yang2023} showed that the Rossi sphere,
which is an example of a non-embeddable CR three-manifold,
admits no CR Yamabe minimizers.
We also remark that $(M, T^{1, 0} M)$ admits a CR Yamabe minimizer
if $n = 1$ and $(M, T^{1, 0} M)$ is embeddable~\cites{Cheng-Malchiodi-Yang2017,Takeuchi2020-Paneitz}.

In this paper,
we consider the non-compact case.
Let $(M, T^{1, 0} M)$ be a (possibly non-compact) connected strictly pseudoconvex CR manifold.
Denote by $\CSC(M, T^{1, 0} M)$ the set of complete contact forms on $M$
with constant Tanaka--Webster scalar curvature.
We say that two contact forms $\theta_{1}$ and $\theta_{2}$ on $M$ are \emph{homothetic}
if there exist a CR diffeomorphism $\varphi$ and a constant $c > 0$
such that $\varphi^{\ast} \theta_{1} = c \theta_{2}$;
otherwise they are said to be \emph{non-homothetic}.
The \emph{geometric moduli space} of complete contact forms
with constant Tanaka--Webster scalar curvature on $(M, T^{1, 0} M)$ is the quotient
\begin{equation}
	\GModuli(M, T^{1, 0} M)
	\coloneqq \CSC(M, T^{1, 0} M) / \sim,
\end{equation}
where $\theta_{1} \sim \theta_{2}$ if $\theta_{1}$ and $\theta_{2}$ are homothetic.
Jerison and Lee proved that $\# \GModuli(M, T^{1, 0} M) = 1$
if $M$ is compact and $Y(M, T^{1, 0} M) \leq 0$~\cite{Jerison-Lee1987}*{Theorem 7.1}
or $(M, T^{1, 0} M) = (S^{2 n + 1}, T^{1, 0} S^{2 n + 1})$~\cite{Jerison-Lee1988}*{Theorem A}.
It is also known that $\# \GModuli(M, T^{1, 0} M) = 1$
if $M$ is compact and admits an Einstein contact form~\cite{Wang2015}*{Theorem 4}.

A typical non-compact situation arises when one considers the complement of a closed subset
in a compact strictly pseudoconvex CR manifold.
Let $(M, T^{1, 0} M)$ be a compact, connected, strictly pseudoconvex CR manifold of dimension $2 n + 1$,
let $\theta$ be a contact form on $M$,
and let $\Lambda$ be a closed subset of $M$.
One seeks complete contact forms with constant Tanaka--Webster scalar curvature on $M \setminus \Lambda$;
we abbreviate the CR structure $T^{1, 0} (M \setminus \Lambda)$ for simplicity.
This leads to the \emph{singular CR Yamabe equation}
\begin{equation}
\label{eq:singular-CR-Yamabe-problem}
	\begin{cases}
		(2 + 2 / n) \Delta_{b} u + R_{\theta} u = \lambda u^{1 + 2 / n} \text{\ on $M \setminus \Lambda$}, \\
		\lim_{x \to \Lambda} u(x) = \infty,
	\end{cases}
\end{equation}
for some constant $\lambda \in \bbR$ and $0 < u \in C^{\infty}(M \setminus \Lambda)$.
The condition $\lim_{x \to \Lambda} u(x) = \infty$ is necessary for the contact form
$\whxth = u^{2 / n} \theta$ to be complete on $M \setminus \Lambda$.
Chanillo and Yang~\cite{Chanillo-Yang2025-preprint}*{Theorem 2.1} proved that $\dim_{H} \Lambda \leq n$
if there exists a solution of \cref{eq:singular-CR-Yamabe-problem} with $\lambda \geq 0$,
where $\dim_{H}$ is the Hausdorff dimension with respect to the Carnot--Carath\'{e}odory distance;
see \cref{subsection:CR-manifolds} for the definition.
Regarding existence results,
Nayatani~\cite{Nayatani1999}*{p.\ 221} constructed a complete contact form $\theta^{\prime}$
on $S^{2 n + 1} \setminus S_{\bbC}^{2 k + 1}$ with $R_{\theta'} = (n + 1) (n - 2 k - 2)$,
where $S_{\bbC}^{2 k + 1}$ is the $\bbC$-sphere
\begin{equation}
	S_{\bbC}^{2 k + 1}
	\coloneqq \Set{(z^{1}, \dots , z^{k + 1}, 0, \dots , 0) \in S^{2 n + 1} | z^{i} \in \bbC}.
\end{equation}
Guidi, Maalaoui, and Martino~\cite{Guidi-Maalaoui-Martino2024}*{Theorem 1.1}
constructed the same contact form by a different method.
They also constructed an infinite family of complete contact forms
with constant Tanaka--Webster scalar curvature on $S^{2 n + 1} \setminus S_{\bbC}^{1}$
by using a bifurcation method~\cite{Guidi-Maalaoui-Martino2024}*{Theorem 1.3}.
Furthermore,
they proved that $S^{2 n + 1} \setminus \tau(S_{\bbC}^{1})$ admits a complete contact form
with constant Tanaka--Webster scalar curvature
if $\tau \in \Diff(S^{2 n + 1})$ satisfies certain conditions \cite{Guidi-Maalaoui-Martino2024}*{Theorem 1.2}.
Afeltra~\cite{Afeltra2020}*{Theorem 1.1} constructed infinitely many $\bbZ$-periodic
contact forms on $S^{2 n + 1} \setminus \{p, - p\}$ via the Lyapunov--Schmidt method.

In this paper,
we establish the existence of infinitely many pairwise
non-homothetic complete contact forms with constant Tanaka--Webster scalar curvature
on certain non-compact strictly pseudoconvex CR manifolds.
In particular,
we prove that the geometric moduli space is infinite
on complements of $\bbR$- and $\bbC$-spheres in the CR sphere.
From this perspective,
our results can be viewed as CR analogues of the existence of
infinitely many non-homothetic complete constant scalar curvature metrics
in the non-compact setting of conformal geometry.

To explain our result in the general setting,
we start with a compact, connected, strictly pseudoconvex CR manifold $(M, T^{1, 0} M)$ of dimension $2 n + 1$
satisfying the following assumption:

\begin{keyassumption}{$(\blacklozenge)$}
\label{keyassumption}
	Assume that the CR Yamabe constant of $(M, T^{1, 0} M)$ is positive.
	If $\dim M = 3$,
	then we further assume that $(M, T^{1, 0} M)$ is universally embeddable;
	see \cref{subsection:CR-manifolds} for definition.
	If $\dim M = 5$ and $(M, T^{1, 0} M)$ is spherical,
	then we additionally assume that the developing map $\Phi \colon \wtM \to S^{5}$ is injective.
\end{keyassumption}

These additional assumptions in low dimensions are imposed
to guarantee the existence of CR Yamabe minimizers on finite covers of $M$.
If the fundamental group $\pi_{1}(M)$ of $M$ has infinitely many normal subgroups of finite index,
then there exist infinitely many pairwise non-homothetic periodic contact forms on $(\wtM, T^{1, 0} \wtM)$.

\begin{theorem}
\label{thm:infinite-geometric-moduli-general}
	Let $(M, T^{1, 0} M)$ be a compact, connected, strictly pseudoconvex CR manifold of dimension $2 n + 1$
	such that \Cref{keyassumption} holds
	and $\pi_{1}(M)$ has infinite profinite completion.
	Then $\# \GModuli(\wtM, T^{1, 0} \wtM) = \infty$.
\end{theorem}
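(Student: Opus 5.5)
Since $\pi_{1}(M)$ has infinite profinite completion, it has finite-index normal subgroups $\Gamma_{1} \supsetneq \Gamma_{2} \supsetneq \cdots$ with indices $d_{j} \coloneqq [\pi_{1}(M) : \Gamma_{j}] \to \infty$. These give a tower of finite normal covers $M_{j} \coloneqq \wtM / \Gamma_{j} \to M$. Each $M_{j}$ is compact and strictly pseudoconvex, and its CR Yamabe constant is positive; positivity lifts under finite covers, for instance because the CR conformal sublaplacian is positive and this is preserved by pulling back functions. In dimension three, universal embeddability of $M$ makes every $M_{j}$ embeddable. In dimension five, injectivity of the developing map of $M$ gives the corresponding property for $M_{j}$, since the two have the same universal cover. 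By Jerison--Lee together with the embeddable three-dimensional results of Cheng--Malchiodi--Yang and the second author, each $M_{j}$ therefore carries a CR Yamabe minimizer $\theta_{j}$, normalized to $R_{\theta_{j}} = 1$. The lift $\wttheta_{j}$ of $\theta_{j}$ to $\wtM$ is complete, being the pullback of a form on a compact manifold, and has constant Tanaka--Webster scalar curvature. So $\wttheta_{j} \in \CSC(\wtM, T^{1,0}\wtM)$, and it is $\Gamma_{j}$-periodic.

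The task is to show infinitely many of the $\wttheta_{j}$ are pairwise non-homothetic. The invariant I would use is an energy comparison. If $Y(M_{j})$ were attained by the pullback of $\theta_{1}$, then $Y(M_{j}) = d_{j}^{1/(n+1)} \frakF(\theta_{1})$. A CR Yamabe constant is always at most $Y(S^{2n+1})$, by Jerison--Lee, so $Y(M_{j}) \le Y(S^{2n+1})$. Hence for $j$ large the minimizer $\theta_{j}$ is strictly better than the pulled-back lower-level solutions, and in particular $\wttheta_{j}$ is not the lift of the minimizer on any fixed $M_{i}$.

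To turn this into a homothety invariant, suppose $\varphi^{\ast}\wttheta_{j} = c\,\wttheta_{k}$ for a CR diffeomorphism $\varphi$ of $\wtM$. Since $R$ is normalized to $1$, we get $c = 1$. The form $\wttheta_{k}$ is invariant under $\Gamma_{k}$, and hence also under $\varphi^{-1}\Gamma_{j}\varphi$. The symmetry group $G$ of $\wttheta_{k}$ is a Lie group acting properly on $\wtM$, since it preserves the Webster metric. It contains $\Gamma_{k}$ cocompactly, and it also contains $\varphi^{-1}\Gamma_{j}\varphi$ cocompactly. I would then compare the quantities $\Vol(\wtM/\Gamma, \wttheta)$, or equivalently the Yamabe energies on the quotients by maximal discrete cocompact subgroups of $G$.

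The cleaner route is the following. For a CSC contact form $\wttheta$ on $\wtM$ with $R=1$, set
\[
V(\wttheta) \coloneqq \inf \{ \Vol_{\wttheta}(\wtM/\Gamma) : \Gamma \subset \pi_{1}(M) \text{ of finite index}, \ \Gamma^{\ast}\wttheta = \wttheta \}.
\]
After conjugating by $\varphi$, this quantity is invariant under homothety. For the minimizer $\theta_{j}$ we have $\Vol_{\theta_{j}}(M_{j}) = Y(M_{j})^{n+1} \le Y(S^{2n+1})^{n+1}$, so $V(\wttheta_{j})$ is uniformly bounded. On the other hand, if $\wttheta_{j}$ descends to $M_{i}$, then on $M_{j}$ it is the pullback of a CSC form on $M_{i}$, which gives
\[
Y(M_{j}) = d_{j}/d_{i} \cdot \text{(volume on $M_{i}$)} \cdot (\cdots).
\]

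The main obstacle is to exclude that $\wttheta_{j}$ has a large hidden symmetry group, that is, that it descends to some small cover. I would handle this by extracting a subsequence. Passing to the limit $j \to \infty$, the normalized minimizers on $\wtM$ would either converge to a periodic limit or bubble off a sphere. The bound $Y(M_{j}) \le Y(S^{2n+1})$ together with the volume growth $d_{j}$ then forces the maximal periodicity lattices of $\wttheta_{j}$ to shrink to finite index $\to\infty$. Consequently only finitely many $\wttheta_{j}$ can lie in any one homothety class, which yields $\#\GModuli(\wtM, T^{1,0}\wtM) = \infty$.
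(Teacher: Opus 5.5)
\textbf{There is a genuine gap.} Your setup matches the paper. This includes the tower of finite normal covers, the minimizers on every $M_j$ given by \Cref{keyassumption}, completeness of the lifts, and the finite-level energy comparison, which is essentially \cref{prop:existence-of-non-homothetic}; its non-homothety conclusion is not actually used for the theorem. The gap is the decisive step on $\wtM$, and none of your mechanisms closes it.

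First, $V$ is not a homothety invariant. A CR diffeomorphism $\varphi$ of $\wtM$ need not normalize $\pi_{1}(M)$, so $\varphi^{-1}\Gamma\varphi$ is in general not a subgroup of $\pi_{1}(M)$. If you enlarge the class to all discrete cocompact groups of CR automorphisms preserving the form, then $V(\tilde{\theta}_0) \le Y(M)^{n+1}$ and $V(\tilde{\theta}_j) \le Y(S^{2n+1})^{n+1}$. Both are bounded, so nothing is contradicted. Moreover, neither of $\Gamma_k$ and $\varphi^{-1}\Gamma_j\varphi$ need contain the other, so you cannot exhibit one quotient form as a pullback of the other and feed it into your energy comparison.

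Second, the compactness/bubbling paragraph is not an argument. On the non-compact $\wtM$, the only normalization relevant to homothety is scaling plus precomposition with elements of $\Aut_{\CR}(\wtM, T^{1,0}\wtM)$, and you have no information about this group. Shrinking symmetry lattices would not exclude homothety anyway, since a CR diffeomorphism can conjugate a lattice to an unrelated one.

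Compact Heisenberg quotients show that input about $\Aut_{\CR}(\wtM, T^{1,0}\wtM)$ is unavoidable. There the lifts of the minimizers along any tower are all constant multiples of the standard contact form. So positivity of $Y(M)$ has to be used on $\wtM$ itself, not merely to produce minimizers.

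\textbf{The missing idea} is to control the homotheties through Schoen's theorem (\cref{thm:Schoen's-theorem}).
\begin{itemize}
\item Suppose infinitely many $\tilde{\theta}_j$ lie in one class. Re-base the tower at the first member of that class, so that $\Phi_j^{\ast}\tilde{\theta}_0 = c_j\tilde{\theta}_j$ with $\Phi_j \in \Aut_{\CR}(\wtM, T^{1,0}\wtM)$. Normalize $\theta_0$ and $\theta_j$ to unit volume.
\item Fix a relatively compact fundamental domain $F$ for $\pi_1(M)$. The $d_j$ translates of $F$ by coset representatives of $\pi_1(M_j)$ in $\pi_1(M)$ form a fundamental domain for $\pi_1(M_j)$. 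Hence some deck transformation $\tau_j$ gives $\Vol_{\tau_j^{\ast}\tilde{\theta}_j}(F) \le d_j^{-1}$.
\item A further deck transformation $\sigma_j$ makes $\Psi_j \coloneqq \sigma_j\circ\Phi_j\circ\tau_j$ satisfy $\Psi_j(\overline{F}) \cap \overline{F} \neq \emptyset$. Since $\sigma_j$ preserves $\tilde{\theta}_0$, still $\Psi_j^{\ast}\tilde{\theta}_0 = c_j\tau_j^{\ast}\tilde{\theta}_j$.
\item Evaluate the scale-invariant quantity $\vartheta \mapsto \Vol_{\vartheta}(F)^{-n/(n+1)}\int_{F} R_{\vartheta}\,\vol_{\vartheta}$ on this form in two ways. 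This gives
\[
	Y(M)\,\Vol_{\tilde{\theta}_0}(\Psi_j(F))^{1/(n+1)} \le Y(S^{2n+1})\,d_j^{-1/(n+1)} \to 0 .
\]
\item Since $Y(M) > 0$, we get $\Vol_{\tilde{\theta}_0}(\Psi_j(F)) \to 0$ while $\Psi_j(\overline{F})$ still meets $\overline{F}$. So the $\Psi_j$ leave every compact set, and $\Aut_{\CR}(\wtM, T^{1,0}\wtM)$ does not act properly.
\item Schoen's theorem, with $\wtM$ non-compact, forces $\wtM \cong \bbH^{2n+1}$. Then $\pi_1(M) \subset \bbH^{2n+1} \rtimes U(n)$ preserves the flat contact form, so $Y(M) = 0$ by \cref{lem:CR-Yamabe-constant-Heisenberg-nilmanifold}, a contradiction.
\end{itemize}
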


The novelty of our work lies in the fact that
our results do not follow from existing constructions in the literature
and rely on a different geometric mechanism.
In particular,
our approach applies to examples that are not accessible by bifurcation or ODE-based methods,
which have played a central role in previous studies.

Our proof is similar to that of \cite{Andrade-Case-Piccione-Wei2025-preprint}*{Theorem 3.3}.
Since $\pi_{1}(M)$ has infinite profinite completion,
we can construct a tower of finite connected coverings over $M$.
\Cref{keyassumption} implies that each such cover has a positive CR Yamabe minimizer.
Its lift to the universal cover defines a periodic contact form
with positive constant Tanaka--Webster scalar curvature.
Suppose to the contrary that $\# \GModuli(\wtM, T^{1, 0} \wtM) < \infty$.
Then the CR diffeomorphism group $\Aut_{\CR}(\wtM, T^{1, 0} \wtM)$ does not act properly.
By a result of Schoen~\cite{Schoen1995},
it follows that $(\wtM, T^{1, 0} \wtM)$ is CR diffeomorphic to the Heisenberg group.
We conclude from the characterization of CR automorphisms of the Heisenberg group that
$Y(M, T^{1, 0} M) = 0$, a contradiction.

As an application,
consider the complements of an $\bbR$-sphere or a $\bbC$-sphere in $S^{2 n + 1}$,
where the $\bbR$-sphere
\begin{equation}
	S_{\bbR}^{k}
	\coloneqq \Set{(x^{1}, \dots , x^{k + 1}, 0, \dots , 0) \in S^{2 n + 1} | x^{i} \in \bbR}
\end{equation}
is the intersection of the totally real linear subspace $\bbR^{k + 1} \times \{0\}$ with $S^{2 n + 1}$.
There exists a torsion-free discrete subgroup $\Gamma$ of $\Aut_{\CR}(S^{2 n + 1} \setminus S_{\bbR}^{k})$
such that the quotient of $S^{2 n + 1} \setminus S_{\bbR}^{k}$ by $\Gamma$
satisfies the assumptions of \cref{thm:infinite-geometric-moduli-general}.

\begin{theorem}
\label{thm:infinite-geometric-moduli-real-hyperbolic}
	The complement $S^{2 n + 1} \setminus S_{\bbR}^{k}$ satisfies
	$\# \GModuli(S^{2 n + 1} \setminus S_{\bbR}^{k}) = \infty$ if $0 \leq k \leq n - 1$.
\end{theorem}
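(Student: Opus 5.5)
We deduce the statement from \cref{thm:infinite-geometric-moduli-general}. The plan is to exhibit a torsion-free discrete subgroup $\Gamma \subset \Aut_{\CR}(S^{2 n + 1} \setminus S_{\bbR}^{k})$ such that:
\begin{itemize}
	\item $\Gamma$ acts freely, properly discontinuously and cocompactly on $\Omega \coloneqq S^{2 n + 1} \setminus S_{\bbR}^{k}$;
	\item $\Omega$ is simply connected, so that $\Omega$ is the universal cover of $M \coloneqq \Gamma \backslash \Omega$;
	\item $\pi_{1}(M) \cong \Gamma$ has infinite profinite completion.
\end{itemize}
We then check \Cref{keyassumption} for $M$. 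Since the CR structure of $\Omega$ is the restriction of that of $S^{2 n + 1}$, the universal cover carries the standard structure, and the conclusion $\# \GModuli(\Omega) = \infty$ follows.

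\emph{Construction of $\Gamma$.} Realize $S^{2 n + 1}$ as the boundary of complex hyperbolic space, so that $\Aut_{\CR}(S^{2 n + 1}) = PU(1, n + 1)$ via the Hermitian form $\Hermform$. The real subspace $\bbR^{k + 1} \times \{0\}$ determines a totally real totally geodesic copy of $\bbH^{k + 1}_{\bbR}$ in $\bbH^{n + 1}_{\bbC}$, whose boundary is $S_{\bbR}^{k}$. Its stabilizer contains $SO(1, k + 1) \times U(n - k)$ (the latter acting on the remaining coordinates), and $SO(1, k + 1)$ preserves $\Omega$.

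Take $\Gamma$ to be a torsion-free cocompact lattice in $SO_{0}(1, k + 1)$. Such lattices exist by Borel's theorem together with Selberg's lemma. For $k = 0$, $SO_{0}(1, 1) \cong \bbR$, and we take $\Gamma \cong \bbZ$ generated by a loxodromic element fixing $\pm p$; this is the $S^{2n+1}\setminus\{p,-p\}$ case.

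\emph{Proper, cocompact action.} Since $\Lambda = S_{\bbR}^{k}$ is the limit set of $\Gamma$, the group acts properly discontinuously on the domain of discontinuity. To verify this concretely, and to get cocompactness, use the nearest-point projection $\Omega \to \bbH_{\bbR}^{k + 1}$, or more precisely the fibration of $\Omega$ over $\bbH_{\bbR}^{k + 1}$ coming from the $SO(1, k + 1)$-equivariant decomposition. Show that $\Omega \cong \bbH^{k + 1}_{\bbR} \times F$ equivariantly, with $F$ compact; the fiber should be a sphere-like manifold of dimension $2 n - k$. Then $\Gamma \backslash \Omega$ is a compact fiber bundle over the compact hyperbolic manifold $\Gamma \backslash \bbH^{k+1}_{\bbR}$.

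\emph{Topological inputs.} Simple connectivity of $\Omega$ follows from codimension: $S_{\bbR}^{k}$ has codimension $2 n + 1 - k \geq n + 2 \geq 3$ when $k \leq n - 1$. The group $\Gamma$ is residually finite, being finitely generated linear (Malcev), and infinite, so its profinite completion is infinite.

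\emph{\Cref{keyassumption}.} Positivity of the CR Yamabe constant should follow because $M$ is spherical and $\Lambda$ is small. I would use a CR analogue of Schoen--Yau/Nayatani: when the Hausdorff dimension of the limit set is small, here $\dim_{H} S_{\bbR}^{k} = k < n$, the Yamabe constant is positive. Alternatively, one can exhibit explicitly a $\Gamma$-invariant contact form of positive scalar curvature on $\Omega$, for instance via a Patterson--Sullivan/Nayatani-type canonical form, whose curvature is positive when $\delta(\Gamma) = k < n$. I expect this step to be the main obstacle.

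The low-dimensional conditions are then handled as follows:
\begin{itemize}
	\item If $\dim M = 5$, the developing map is the inclusion $\Omega \hookrightarrow S^{5}$, which is injective.
	\item If $\dim M = 3$, we have $n = 1$, so $k = 0$. Universal embeddability should follow since finite covers of $M$ are quotients of a domain in $S^{3}$ and bound complex manifolds, namely quotients of the corresponding domains in $\bbC^{2}$ or in $\bbH^{2}_{\bbC}$ by $\Gamma$.
\end{itemize}
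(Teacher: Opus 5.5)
Your proposal is correct and follows essentially the same route as the paper: the same block-diagonally embedded torsion-free cocompact lattice of $SO_{+}(k+1,1)$, simple connectivity of the complement, residual finiteness, Nayatani's theorem for $Y(M_{\Gamma}) > 0$, injectivity of the developing map, and universal embeddability because finite covers are again quotients $M_{\Gamma'}$. The differences are minor. The paper gets compactness of $M_{\Gamma}$, and also $\delta_{\Gamma} = \dim_{H} \Lambda_{\Gamma} = k$ via Corlette--Iozzi, from convex cocompactness of $\Gamma$ rather than from your nearest-point fibration over $\bbH^{k+1}_{\bbR}$. It derives residual finiteness through the adjoint representation of $PU(n+1,1)$ rather than directly from linearity of $SO_{0}(1,k+1)$. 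Finally, for $k = 0$ the set $\{p,-p\}$ lies in a chain, so Nayatani only gives $R_{\theta_{\Gamma}} \geq 0$ with $R_{\theta_{\Gamma}} \not\equiv 0$, which still suffices for $Y > 0$.
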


Similarly,
we consider the $\bbC$-sphere $S_{\bbC}^{2 k + 1}$,
which is the intersection of the complex linear subspace $\bbC^{k + 1} \times \{0\}$ with $S^{2 n + 1}$.
A similar argument to that for the $\bbR$-sphere implies the following result.

\begin{theorem}
\label{thm:infinite-geometric-moduli-complex-hyperbolic}
	The complement $S^{2 n + 1} \setminus S_{\bbC}^{2 k + 1}$ satisfies
	$\# \GModuli(S^{2 n + 1} \setminus S_{\bbC}^{2 k + 1}) = \infty$ if $0 \leq k < (n - 2) / 2$.
\end{theorem}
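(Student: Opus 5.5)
The plan is to reduce \cref{thm:infinite-geometric-moduli-complex-hyperbolic} to \cref{thm:infinite-geometric-moduli-general}. This requires a compact quotient of $\Omega \coloneqq S^{2 n + 1} \setminus S_{\bbC}^{2 k + 1}$ whose universal cover is $\Omega$ itself. I would realize $S^{2 n + 1}$ as the boundary of complex hyperbolic space, acted on by $PU(n + 1, 1)$ through the Hermitian form $\Hermform$. Then $S_{\bbC}^{2 k + 1}$ is the boundary at infinity of a totally geodesic complex hyperbolic subspace $\bbC H^{k + 1}$. Its stabilizer contains $U(k + 1, 1) \times U(n - k)$, and this subgroup acts on $\Omega$ by CR automorphisms. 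Set $G = PU(k + 1, 1)$ and choose a torsion-free cocompact lattice $\Gamma_{0} \subset G$; these exist by Borel together with Selberg's lemma. Embed $\Gamma_{0}$ into the stabilizer, acting trivially on the orthogonal $\bbC^{n - k}$ factor.

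\textbf{Step 1: proper discontinuity and compactness.} I would show that $\Gamma_{0}$ acts freely, properly discontinuously and cocompactly on $\Omega$. The natural tool is the fibration $\Omega \to \bbC H^{k + 1}$ obtained by projecting along complex orthogonal directions, as in Nayatani's construction. Concretely, a point $[z : w]$ with $w \neq 0$ maps to the point of $\bbC H^{k + 1}$ determined by $z$ after normalization. The fibers are compact, diffeomorphic to $S^{2 n - 2 k - 1}$ times a ball-type factor, and the map is $G$-equivariant. Since $\Gamma_{0}$ acts properly and cocompactly on $\bbC H^{k + 1}$, the quotient $M = \Gamma_{0} \backslash \Omega$ is a compact spherical CR manifold. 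Freeness follows from torsion-freeness, because elliptic elements are the only ones with fixed points.

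\textbf{Step 2: topology.} $\Omega$ deformation retracts onto $S^{2 n - 2 k - 1}$, the link of the complementary subspace. This sphere is simply connected when $2 n - 2 k - 1 \geq 3$, which holds under $k < (n - 2)/2$. Hence $\Omega = \wtM$ and $\pi_{1}(M) \cong \Gamma_{0}$. Since $\Gamma_{0}$ is a finitely generated linear group, Malcev's theorem makes it residually finite. Being infinite, it then has infinite profinite completion.

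\textbf{Step 3: \Cref{keyassumption}.} Positivity of the CR Yamabe constant is the main obstacle. For a spherical manifold $\Gamma \backslash \Omega$ with $\Gamma$ convex cocompact, the sign of $Y$ should be governed by comparing the critical exponent $\delta(\Gamma)$ with $n$; this is Nayatani's CR analogue of Schoen--Yau. Here $\Lambda(\Gamma_{0}) = S_{\bbC}^{2 k + 1}$ has Hausdorff dimension $2 k + 2$ with respect to the Carnot--Carath\'{e}odory distance, and $\delta(\Gamma_{0}) = 2 k + 2$. The condition $2 k + 2 < n$ is exactly $k < (n - 2)/2$, so $Y > 0$.

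As a consistency check, Nayatani's explicit form on $\Omega$ is $G$-invariant with $R = (n + 1)(n - 2 k - 2) > 0$, so it descends to $M$ with positive constant curvature. If a direct criterion is needed, I would argue from the sublaplacian instead. Positivity of the conformal sublaplacian follows from the existence of a positive function with $(2 + 2/n)\Delta_{b} u + R u > 0$, and the constant function for this descended form qualifies.

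For the low-dimensional clauses: $n \geq 3$ is forced by $k \geq 0$, so $\dim M \geq 7$ and the dimension-$3$ clause is vacuous. The dimension-$5$ clause is likewise vacuous. Applying \cref{thm:infinite-geometric-moduli-general} then completes the proof.
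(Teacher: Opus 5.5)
Your argument is essentially the paper's: a torsion-free cocompact lattice in the stabilizer of $\bbC H^{k+1}$, simple connectivity of $S^{2n+1}\setminus S_{\bbC}^{2k+1}$, residual finiteness via Mal'cev, and $Y>0$ from Nayatani's criterion $\delta_{\Gamma}=2k+2<n$. The paper gets compactness of the quotient from convex cocompactness rather than from your fibration, and your invariant-form argument for $Y>0$ is a valid and more elementary alternative.

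Two small fixes are needed. First, $PU(k+1,1)$ admits no embedding into the stabilizer that is trivial on the $\bbC^{n-k}$ factor, because the center of $U(k+1,1)$ acts on $[z:w]$ by rotating the $w$-sphere; take $\Gamma_{0}$ torsion-free and cocompact in $SU(k+1,1)$ or $U(k+1,1)$, as the paper does. Second, the fibers of $\Omega\to\bbC H^{k+1}$ are exactly $S^{2n-2k-1}$ (dimension $2n+1-(2k+2)$), with no ball factor, though compactness of the fibers is all you use.
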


We add some remarks on these theorems.
Let $V$ be a real linear subspace in $\bbC^{n + 1}$
and set $\Lambda \coloneqq V \cap S^{2 n + 1} \subset S^{2 n + 1}$.
If $V$ is totally real or complex and $\dim_{H} \Lambda < n$,
then we proved that $\# \GModuli(S^{2 n + 1} \setminus \Lambda) = \infty$.
The key point of the proof is that
there exists a torsion-free convex cocompact discrete subgroup $\Gamma$ of $PU(n + 1, 1)$
whose limit set coincides with $\Lambda$.
Then the quotient of $S^{2 n + 1} \setminus \Lambda$ by $\Gamma$ satisfies \Cref{keyassumption}.
On the other hand,
when $V$ is neither totally real nor complex,
there exist no discrete subgroups in $PU(n + 1, 1)$
such that its limit set coincides with $\Lambda$,
and hence our argument does not apply.
As far as we know,
there are no existence results for complete contact forms
with constant Tanaka--Webster scalar curvature on $S^{2 n + 1} \setminus \Lambda$

We will also apply \cref{thm:infinite-geometric-moduli-general}
to circle bundles over compact \Kahler manifolds (\cref{thm:infinite-geometric-moduli-circle-bundle})
and the boundary of a Reinhardt domain (\cref{subsection:boundary-of-Reinhardt-domain}),
which is neither spherical nor Sasakian.

This paper is organized as follows.
In \cref{section:CR-geometry},
we review basic materials in CR geometry.
In \cref{section:CR-Yamabe-problem},
we recall the CR Yamabe problem and summarize known existence results that motivate the present work.
In \cref{section:complex-hyperbolic-geometry},
we collect several facts from complex hyperbolic geometry
that are needed to analyze complements of $\bbR$- and $\bbC$-spheres in the CR sphere.
In \cref{section:infinite-tower},
we study an existence problem for infinite towers of finite connected coverings;
this constitutes the technical core of the paper.
In \cref{section:proof-of-main-result},
we use this framework to prove \cref{thm:infinite-geometric-moduli-general}.
In \cref{section:complements-of-spheres},
we apply this theorem to complements of $\bbR$- and $\bbC$-spheres in the CR sphere.
Finally, in \cref{section:other-applications},
we discuss further applications to circle bundles over compact \Kahler manifolds
and to the boundary of a Reinhardt domain.

\section{CR geometry}
\label{section:CR-geometry}

\subsection{CR manifolds}
\label{subsection:CR-manifolds}

Let $M$ be a smooth $(2 n + 1)$-dimensional manifold.
A \emph{CR structure} is a rank $n$ complex subbundle $T^{1, 0} M$
of the complexified tangent bundle $T M \otimes \bbC$ such that
\begin{equation}
	T^{1, 0} M \cap T^{0, 1} M = 0, \qquad
	\comm{\Gamma(T^{1, 0} M)}{\Gamma(T^{1, 0} M)} \subset \Gamma(T^{1, 0} M),
\end{equation}
where $T^{0, 1} M$ is the complex conjugate of $T^{1, 0} M$ in $T M \otimes \bbC$.
Set $H M = \Re T^{1, 0} M$
and let $J \colon H M \to H M$ be the unique complex structure on $H M$ 
such that
\begin{equation}
	T^{1, 0} M = \Ker(J - \sqrt{- 1} \colon H M \otimes \bbC \to H M \otimes \bbC).
\end{equation}
A typical example of a CR manifold is a real hypersurface $M$ in an $(n + 1)$-dimensional complex manifold $X$;
this $M$ has the induced CR structure
\begin{equation}
	T^{1, 0} M
	\coloneqq T^{1, 0} X |_{M} \cap (T M \otimes \bbC).
\end{equation}
In particular,
the unit sphere
\begin{equation}
	S^{2 n + 1}
	\coloneqq \Set{z \in \bbC^{n + 1} | \abs{z}^{2} = 1}
\end{equation}
has the \emph{canonical} CR structure $T^{1, 0} S^{2 n + 1}$.

Let $(M, T^{1, 0} M)$ and $(M^{\prime}, T^{1, 0} M^{\prime})$ be CR manifolds.
A smooth map $f \colon M \to M^{\prime}$ is called a \emph{CR map}
if $f_{\ast} (T^{1, 0} M) \subset T^{1, 0} M^{\prime}$.
If $f$ is a diffeomorphism,
then we call $f$ a \emph{CR diffeomorphism}.
A CR manifold $(M, T^{1, 0} M)$ is said to be \emph{spherical}
if it is locally isomorphic to $(S^{2 n + 1}, T^{1, 0} S^{2 n + 1})$.

A CR structure $T^{1, 0} M$ is said to be \emph{strictly pseudoconvex}
if there exists a smooth real-valued one-form $\theta$ annihilating exactly $H M$
such that the \emph{Levi form}
\begin{equation}
	L_{\theta}(V, W)
	\coloneqq d \theta(V, J W)
	\qquad (V, W \in H M)
\end{equation}
is positive definite.
We call such a one-form a \emph{contact form}.
The triple $(M, T^{1, 0} M, \theta)$ is called a \emph{pseudo-Hermitian manifold}.
Denote by $T$ the \emph{Reeb vector field} with respect to $\theta$; 
that is,
the unique vector field satisfying
\begin{equation}
	\theta(T) = 1, \qquad T \contr d \theta = 0.
\end{equation}
Let $(Z_{\alpha})$ be a local frame of $T^{1, 0} M$,
and set $Z_{\ovxa} = \overline{Z_{\alpha}}$.
Then
$(T, Z_{\alpha}, Z_{\ovxa})$ gives a local frame of $T M \otimes \bbC$,
called an \emph{admissible frame}.
Its dual frame $(\theta, \theta^{\alpha}, \theta^{\ovxa})$
is called an \emph{admissible coframe}.
The two-form $d \theta$ is written as
\begin{equation}
	d \theta
	= \sqrt{- 1} h_{\alpha \ovxb} \theta^{\alpha} \wedge \theta^{\ovxb},
\end{equation}
where $(h_{\alpha \ovxb})$ is a positive definite Hermitian matrix.
We use $h_{\alpha \ovxb}$ and its inverse $h^{\alpha \ovxb}$
to raise and lower indices of tensors.

The flat model of pseudo-Hermitian manifolds is the \emph{Heisenberg group} $\bbH^{2 n + 1}$,
that is,
the Lie group with the underlying manifold $\bbR \times \bbC^{n}$ and the multiplication
\begin{equation}
	(t, z) \cdot (t^{\prime}, z^{\prime})
	\coloneqq (t + t^{\prime} + 2 \Im (z \cdot \ovz^{\prime}), z + z^{\prime}).
\end{equation}
For $\alpha = 1, \dots , n$,
we introduce a left-invariant complex vector field $Z_{\alpha}$ by
\begin{equation}
	Z_{\alpha}
	\coloneqq \pdv{}{z^{\alpha}} + \sqrt{- 1} \ovz^{\alpha} \pdv{}{t}.
\end{equation}
The canonical CR structure $T^{1, 0} \bbH^{2 n + 1}$
is spanned by $Z_{1}, \dots , Z_{n}$.
Define a left-invariant one-form $\theta$ on $\bbH^{2 n + 1}$ by
\begin{equation}
	\theta
	\coloneqq d t + \sqrt{-1} \sum_{\alpha = 1}^{n}
		(z^{\alpha} d \ovz^{\alpha} - \ovz^{\alpha} d z^{\alpha}).
\end{equation}
Then $\theta$ annihilates $T^{1, 0} \bbH^{2 n + 1}$
and the Levi form $L_{\theta}$ satisfies
$L_{\theta}(Z_{\alpha}, Z_{\ovxb}) = 2 \delta_{\alpha \beta}$;
in particular,
$(\bbH^{2 n + 1}, T^{1, 0} \bbH^{2 n + 1})$ is a strictly pseudoconvex CR manifold
and $\theta$ is a contact form on $\bbH^{2 n + 1}$.

A CR manifold $(M, T^{1, 0} M)$ is said to be \emph{embeddable}
if there exists a smooth embedding $F \colon M \to \bbC^{N}$
such that $F_{\ast}(T^{1, 0} M) \subset T^{1, 0} \bbC^{N}$.
It is known that a compact strictly pseudoconvex CR manifold $(M, T^{1, 0} M)$
of dimension at least five must be embeddable~\cite{Boutet_de_Monvel1975}.
On the other hand,
there are many non-embeddable compact strictly pseudoconvex CR manifolds of dimension three;
see for example \cite{Burns-Epstein1990-Embed}.
We say that $(M, T^{1, 0} M)$ is \emph{universally embeddable}
if every finite cover of $(M, T^{1, 0} M)$ is embeddable.
This condition is strictly stronger than the embeddability condition~\cite{Case-Yang2025}.

Assume that $M$ is connected and fix a contact form $\theta$ on $M$.
We can also endow $M$ with the \emph{Carnot--Carath\'{e}odory distance} $d_{\CC}$ on $M$ as follows.
For any $p, q \in M$,
we can find a smooth path $c \colon \clcl{0}{1} \to M$
such that $c(0) = p$, $c(1) = q$, and $\theta(c^{\prime}(t)) = 0$.
Then $d_{\CC}(p, q)$ is the infimum of the length of such curves.
Denote by $\dim_{H} A$ the Hausdorff dimension of $A \subset M$ with respect to $d_{\CC}$.
The Carnot--Carath\'{e}odory distance $d_{\CC}$ induces the standard topology of $M$,
but the Hausdorff dimension $\dim_{H} M$ of $M$ is equal to $2 n + 2$,
which does not coincide with its topological dimension.
See \cite{Gromov1996} for more details.

\subsection{Tanaka--Webster connection}
\label{subsection:TW-connection}

A contact form $\theta$ induces a canonical connection $\nabla$ on $T M$,
called the \emph{Tanaka--Webster connection} with respect to $\theta$.
It is defined by
\begin{equation}
	\nabla T
	= 0,
	\quad
	\nabla Z_{\alpha}
	= \omega_{\alpha} {}^{\beta} Z_{\beta},
	\quad
	\nabla Z_{\ovxa}
	= \omega_{\ovxa} {}^{\ovxb} Z_{\ovxb},
\end{equation}
where $\omega_{\ovxa} {}^{\ovxb} = \overline{\omega_{\alpha} {}^{\beta}}$,
with the following structure equations:
\begin{gather}
\label{eq:str-eq-of-TW-conn1}
	d \theta^{\beta}
	= \theta^{\alpha} \wedge \omega_{\alpha} {}^{\beta}
	+ A^{\beta} {}_{\ovxa} \theta \wedge \theta^{\ovxa}, \\
\label{eq:str-eq-of-TW-conn2}
	d h_{\alpha \ovxb}
	= \omega_{\alpha} {}^{\gamma} h_{\gamma \ovxb}
		+ h_{\alpha \ovxg} \omega_{\ovxb} {}^{\ovxg}.
\end{gather}
The tensor $A_{\alpha \beta} = \overline{A_{\ovxa \ovxb}}$
is symmetric and is called the \emph{Tanaka--Webster torsion}.
The \emph{sub-Laplacian} $\Delta_{b}$ is defined by
\begin{equation}
	\Delta_{b} u
	\coloneqq - \nabla^{\alpha} \nabla_{\alpha} u - \nabla^{\ovxb} \nabla_{\ovxb} u.
\end{equation}

The curvature form
$\Omega_{\alpha} {}^{\beta} \coloneqq d \omega_{\alpha} {}^{\beta}
- \omega_{\alpha} {}^{\gamma} \wedge \omega_{\gamma} {}^{\beta}$
of the Tanaka--Webster connection satisfies
\begin{equation}
\label{eq:curvature-form-of-TW-connection}
	\Omega_{\alpha} {}^{\beta}
	= R_{\alpha} {}^{\beta} {}_{\rho \ovxs} \theta^{\rho} \wedge \theta^{\ovxs}
		\qquad \text{modulo $\theta, \theta^{\rho} \wedge \theta^{\gamma},
		\theta^{\ovxg} \wedge \theta^{\ovxs}$}.
\end{equation}
We call the tensor $R_{\alpha} {}^{\beta} {}_{\rho \ovxs}$
the \emph{Tanaka--Webster curvature}.
This tensor has the symmetry 
\begin{equation}
	R_{\alpha \ovxb \rho \ovxs}
	= R_{\rho \ovxb \alpha \ovxs}
	= R_{\alpha \ovxs \rho \ovxb}.
\end{equation}
Contraction of indices gives the \emph{Tanaka--Webster Ricci curvature}
$\Ric_{\rho \ovxs} = R_{\alpha} {}^{\alpha} {}_{\rho \ovxs}$
and the \emph{Tanaka--Webster scalar curvature}
$R_{\theta} = \Ric_{\rho} {}^{\rho}$.
When $n \geq 2$,
the \emph{Chern tensor} $S_{\alpha \ovxb \rho \ovxs}$ is the completely trace-free part
of $R_{\alpha \ovxb \rho \ovxs}$;
this tensor is the CR analogue of the Weyl tensor in conformal geometry.
It is known that the Chern tensor vanishes identically
if and only if $(M, T^{1, 0} M)$ is spherical~\cite{Chern-Moser1974}.

Fix a contact form $\theta$ on $M$.
Then any contact form $\whxth$ is of the form $u^{2 / n} \theta$,
where
\begin{equation}
	u \in C^{\infty}_{+}(M) \coloneqq \Set{u \in C^{\infty}(M) | u > 0}.
\end{equation}
Under this conformal change,
the Tanaka--Webster scalar curvature $R_{\wtxth}$ with respect to $\whxth$ satisfies
\begin{equation}
	R_{\wtxth}
	= u^{- 1 - 2 / n} \sbra*{(2 + 2 / n) \Delta_{b} u + R_{\theta} u};
\end{equation}
see \cite{Jerison-Lee1987}*{p.\ 174} for example.

\subsection{Sasakian manifolds}
\label{subsection:Sasakian-manifolds}

Sasakian manifolds constitute an important class of pseudo-Hermitian manifolds.
See~\cite{Boyer-Galicki2008} for a comprehensive introduction.

A \emph{Sasakian manifold} is a pseudo-Hermitian manifold $(S, T^{1, 0}S, \theta)$
with vanishing Tanaka--Webster torsion.
This condition is equivalent to the requirement that the Reeb vector field $T$ of $\theta$
preserves the CR structure $T^{1, 0} S$.

A typical example of a Sasakian manifold
is the circle bundle associated with a negative Hermitian holomorphic line bundle.
Let $Y$ be an $n$-dimensional complex manifold
and $(L, h)$ a Hermitian holomorphic line bundle over $Y$
such that
\begin{equation}
	\omega = - \sqrt{- 1} \Theta_{h} = d d^{c} \log h
\end{equation}
is a \Kahler form on $Y$,
where $d^{c} \coloneqq (\sqrt{- 1} / 2)(\delb - \partial)$.
Consider the circle bundle
\begin{equation}
	S = \Set{ v \in L | h(v, v) = 1}
\end{equation}
over $Y$,
which is a real hypersurface in the total space of $L$
and has the canonical CR structure $T^{1, 0} S$.
The one-form $\theta = d^{c} \log h |_{S}$ is a contact form on $S$
and its Tanaka--Webster scalar curvature $R_{\theta}$ coincides with $p^{\ast} R_{\omega}$,
where $p \colon S \to Y$ is the projection
and $R_{\omega}$ is the scalar curvature of $\omega$.
Moreover,
the CR manifold $(S, T^{1, 0} S)$ is spherical if and only if
$(Y, \omega)$ is Bochner-flat.
See \cite{Takeuchi2022-Chern}*{Section 2.3} for more details.

\subsection{Schoen's Theorem for CR automorphism group}

The CR sphere and the Heisenberg group play a distinguished role
in CR geometry from the viewpoint of symmetry.
To explain this,
we begin with some definitions.

\begin{definition}
	Let $(M, T^{1, 0} M)$ be a strictly pseudoconvex CR manifold of dimension $2 n + 1$.
	The \emph{CR automorphism group} $\Aut_{\CR}(M, T^{1, 0} M)$
	consists of all CR diffeomorphisms $\varphi \in \Diff(M)$
	equipped with the compact-open topology.
	We say that $\Aut_{\CR}(M, T^{1, 0} M)$ \emph{acts properly} on $M$
	if $\Set{\varphi \in \Aut_{\CR}(M, T^{1, 0} M) | \varphi(K) \cap K \neq \emptyset}$
	is relatively compact in $\Aut_{\CR}(M, T^{1, 0} M)$ for any compact set $K \subset M$.
\end{definition}

Schoen's theorem for CR automorphism group states that
$\Aut_{\CR}(M, T^{1, 0} M)$ acts properly except for
$(S^{2 n + 1}, T^{1, 0} S^{2 n + 1})$ or $(\bbH^{2 n + 1}, T^{1, 0} \bbH^{2 n + 1})$.

\begin{theorem}[\cite{Schoen1995}*{Theorems 3.3' and 3.4'}]
\label{thm:Schoen's-theorem}
	Let $(M, T^{1, 0} M)$ be a strictly pseudoconvex CR manifold of dimension $2 n + 1$.
	If the CR automorphism group $\Aut_{\CR}(M, T^{1, 0} M)$ of $(M, T^{1, 0} M)$ does not act properly on $M$,
	then $(M, T^{1, 0} M)$ is CR diffeomorphic to
	$(S^{2 n + 1}, T^{1, 0} S^{2 n + 1})$ or $(\bbH^{2 n + 1}, T^{1, 0} \bbH^{2 n + 1})$.
\end{theorem}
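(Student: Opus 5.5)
The plan is to follow Schoen's original strategy, transplanted from conformal to CR geometry. The idea is to turn non-properness into a sequence of conformal factors that must degenerate, and then read off the flat model from the limiting object. Suppose $\Aut_{\CR}(M, T^{1, 0} M)$ does not act properly. Then there are a compact set $K$ and a sequence $\varphi_{k}$ with $\varphi_{k}(x_{k}) \in K$, $x_{k} \in K$, and no subsequence converging in $\Aut_{\CR}$. Fix a contact form $\theta$ and write
\[
\varphi_{k}^{\ast} \theta = u_{k}^{2 / n} \theta, \qquad 0 < u_{k} \in C^{\infty}(M).
\]
By the transformation law for $R_{\theta}$ recalled above, each $u_{k}$ solves
\[
(2 + 2/n) \Delta_{b} u_{k} + R_{\theta} u_{k} = (R_{\theta} \circ \varphi_{k}) \, u_{k}^{1 + 2/n}.
\]

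The first step is an Arzel\`a--Ascoli type dichotomy. If the $u_{k}$ were locally bounded above and away from zero, then subelliptic estimates for $\Delta_{b}$ (Folland--Stein) would bound all derivatives of $\varphi_{k}$ in Heisenberg-type coordinates. A subsequence would then converge to a CR diffeomorphism, contradicting the choice of $\varphi_{k}$. Hence, after passing to a subsequence, the $u_{k}$ degenerate, and a Harnack inequality for positive solutions shows that this happens uniformly on compact sets away from a point $p$. Concretely, $u_{k} \to 0$ locally uniformly on $M \setminus \{p\}$ while $\max u_{k} \to \infty$ near $p$, or the symmetric picture obtained by replacing $\varphi_{k}$ with $\varphi_{k}^{-1}$.

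Next we normalize: $v_{k} \coloneqq u_{k} / u_{k}(q)$ for a fixed $q \neq p$. Harnack and subelliptic estimates give a subsequence converging on $M \setminus \{p\}$ to a positive solution $G$ of $(2 + 2/n) \Delta_{b} G + R_{\theta} G = 0$. The right-hand side disappears in the limit because $u_{k}^{2/n} \to 0$ there. The function $G$ is singular at $p$ when $M$ is compact; when $M$ is non-compact, $p$ may be the ideal point at infinity. The contact form $\hat{\theta} = G^{2/n} \theta$ on $M \setminus \{p\}$ is then scalar-flat. Moreover, the maps $\varphi_{k}$, suitably recentered, act on it asymptotically as homotheties with dilation factors tending to $0$ or $\infty$. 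In the compact case one would first dispose of $Y \leq 0$: there the constant-curvature contact form is unique up to scaling by Jerison--Lee, so the group preserves a pseudo-Hermitian structure and is compact, hence proper.

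The key, and hardest, step is to show that $(M \setminus \{p\}, \hat{\theta})$ is CR isometric to the Heisenberg group. I would argue that the limiting dilation structure forces $\hat{\theta}$ to be invariant under a one-parameter family of homotheties $\delta_{t}$ fixing a point and contracting everything to it. Because the pseudo-Hermitian curvature tensors scale nontrivially under homothety, the Chern tensor, the torsion and the Tanaka--Webster curvature must all vanish. Completeness then gives the flat model $\bbH^{2n+1}$. Making the convergence of the rescaled $\varphi_{k}$ rigorous is delicate: it requires a CR positive mass or removable-singularity type control of $G$ near $p$, and I expect this to be the main obstacle. Finally, if $M$ is compact then $M = (M \setminus \{p\}) \cup \{p\}$ is the one-point compactification of $\bbH^{2n+1}$, i.e.\ $S^{2n+1}$ via the Cayley transform. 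If $M$ is non-compact and $p$ is the ideal point, we get $M \cong \bbH^{2n+1}$ directly.
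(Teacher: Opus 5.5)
The paper does not prove this statement; it quotes Schoen. I am therefore measuring your outline against what such a proof requires.

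The overall shape is right: conformal factors $u_k$ that must degenerate, a normalized limit $G$ with $(2+2/n)\Delta_b G + R_\theta G = 0$, and the scalar-flat form $\hat{\theta} = G^{2/n}\theta$. But the step that actually identifies the model, the flatness of $\hat{\theta}$, is not proved, and the mechanism you propose for it does not work.

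Nothing in your argument produces exact homotheties of $\hat{\theta}$, let alone a one-parameter group $\delta_t$. In the compact case with $Y > 0$, uniqueness of the Green's function only gives $\varphi_k^{\ast}(G_{\varphi_k(p)}^{2/n}\theta) = c_k G_p^{2/n}\theta$. This relates two \emph{different} scalar-flat forms unless $\varphi_k(p) = p$, and you have no way to arrange that.

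Falling back on a CR positive mass theorem is not an option either. Such a theorem is only available under extra hypotheses (embeddability when $n = 1$; the mass is negative for the Rossi spheres), whereas the statement covers every strictly pseudoconvex CR manifold.

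The missing idea is a blow-up argument that needs neither homotheties nor mass. Put $\lambda_k = u_k(q)^{-2/n} \to \infty$ and $v_k = u_k/u_k(q)$.
\begin{itemize}
\item On each compact $K \subset M \setminus \{p\}$, $\varphi_k^{\ast}(\lambda_k\theta) = v_k^{2/n}\theta \to \hat{\theta}$ smoothly.
\item $\varphi_k$ is a pseudo-Hermitian isometry from $(K, \varphi_k^{\ast}(\lambda_k\theta))$ onto $(\varphi_k(K), \lambda_k\theta)$, and $\varphi_k(K)$ stays in a fixed compact set (automatic when $M$ is compact).
\item The norms of the Tanaka--Webster curvature and torsion of $\lambda\theta$ are $\lambda^{-1}$ times those of $\theta$. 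So both are $O(\lambda_k^{-1})$ on $K$ for $\varphi_k^{\ast}(\lambda_k\theta)$.
\end{itemize}
By the smooth convergence, both therefore vanish identically for $\hat{\theta}$ on $K$, hence on all of $M \setminus \{p\}$.

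Even with this, several steps in your outline remain assertions rather than arguments:
\begin{itemize}
\item One-point concentration does not follow from Harnack alone, because the potential $(R_\theta \circ \varphi_k)u_k^{2/n}$ is not a priori bounded. You need an $\varepsilon$-regularity estimate driven by $\int_B u_k^{2+2/n}\vol_\theta = \Vol_\theta(\varphi_k(B))$, plus a connectedness argument applied to both $\varphi_k$ and $\varphi_k^{-1}$.
\item Completeness of $\hat{\theta}$ has to be derived; in the compact case it comes from the pole asymptotics $G \sim c\rho^{-2n}$.
\item A complete flat $\hat{\theta}$ only gives a quotient $\bbH^{2n+1}/\Gamma$. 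You still have to show $\Gamma$ is trivial, e.g.\ using the Heisenberg-like end at $p$ and volume growth.
\item In the non-compact case there is no global volume bound. Saying ``$p$ is the ideal point at infinity'' does not establish the concentration picture, keep $\varphi_k(K)$ in a compact set, or give completeness.
\end{itemize}
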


\subsection{The CR automorphism group of $S^{2 n + 1}$ and $\bbH^{2 n + 1}$}

The $(n + 1)$-dimensional \emph{complex hyperbolic space} is the ball
\begin{equation}
	\bbB_{\bbC}^{n + 1}
	\coloneqq \Set{z = (z^{1}, \dots , z^{n + 1}) \in \bbC^{n + 1} | \abs{z}^{2}  < 1}
\end{equation}
endowed with the complete \Kahler--Einstein form
\begin{equation}
	\omega_{\bbB}
	\coloneqq - \frac{1}{2} d d^{c} \log (1 - \abs{z}^{2}).
\end{equation}
Denote by $d(z, w)$ the geodesic distance
between $z \in \bbB_{\bbC}^{n + 1}$ and $w \in \bbB_{\bbC}^{n + 1}$.
\emph{Complex geodesics} are the non-empty intersections of complex lines and $\bbB_{\bbC}^{n + 1}$.
The boundary $S^{2 n + 1}$ of $\bbB_{\bbC}^{n + 1}$
has the canonical CR structure $T^{1, 0} S^{2 n + 1}$ described in \cref{subsection:CR-manifolds}.
The boundary of a complex geodesic is a circle in $S^{2 n + 1}$
that is transverse to the canonical contact structure on $S^{2 n + 1}$,
which is known as a \emph{chain};
see \cite{Chern-Moser1974} for a more general definition.

Let $U(n + 1, 1)$ be the unitary group with respect to
the Hermitian form determined by $\diag(- 1, 1, \dots , 1)$;
that is,
\begin{equation}
	U(n + 1, 1)
	\coloneqq \Set{A \in GL(n + 2, \bbC) | A^{\ast} \Hermform A = \Hermform}.
\end{equation}
This group acts on both $\bbB_{\bbC}^{n + 1}$ and $S^{2 n + 1}$ by the fractional linear transformation
\begin{equation}
\label{eq:group-action}
	\begin{pmatrix}
		a & b \\
		c & D
	\end{pmatrix}
	\cdot z
	\coloneqq \frac{c + D z}{a + b z}.
\end{equation}
This action preserves the \Kahler form $\omega_{\bbB}$ and the CR structure $T^{1, 0} S^{2 n + 1}$.
The map \cref{eq:group-action} is equal to the identity map if and only if
the matrix is proportional to the identity matrix.
Hence the action of $U(n + 1, 1)$ descends to that of the projective unitary group $PU(n + 1, 1)$.
Moreover,
it is known that
\begin{equation}
	\Aut(\bbB_{\bbC}^{n + 1}, \omega_{\bbB})
	= \Aut_{\CR}(S^{2 n + 1}, T^{1, 0} S^{2 n + 1})
	= PU(n + 1, 1);
\end{equation}
see~\cite{Burns-Shnider1976} for example.

Under the Cayley transform,
the Heisenberg group can be identified with $S^{2 n + 1}$ with the point at infinity removed.
In particular,
the CR automorphism group of the Heisenberg group is isomorphic to
that of the sphere fixing the point at infinity.
More precisely,
\begin{equation}
	\Aut_{\CR}(\bbH^{2 n + 1}, T^{1, 0} \bbH^{2 n + 1})
	= \bbH^{2 n + 1} \rtimes CU(n),
\end{equation}
where $CU(n)$ is the conformal unitary group.

\begin{lemma}
\label{lem:CR-Yamabe-constant-Heisenberg-nilmanifold}
	Let $(M, T^{1, 0} M)$ be a compact, connected, strictly pseudoconvex CR manifold of dimension $2 n + 1$.
	If its universal cover is CR diffeomorphic to the Heisenberg group,
	then $Y(M, T^{1, 0} M) = 0$.
\end{lemma}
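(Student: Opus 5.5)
Identify $\wtM$ with $\bbH^{2 n + 1}$ via the given CR diffeomorphism. Then $M = \bbH^{2 n + 1} / \Gamma$, where $\Gamma \cong \pi_{1}(M)$ acts by deck transformations. These are CR automorphisms, so $\Gamma$ is a discrete, torsion-free subgroup of $\Aut_{\CR}(\bbH^{2 n + 1}, T^{1, 0} \bbH^{2 n + 1}) = \bbH^{2 n + 1} \rtimes CU(n)$ acting freely, properly discontinuously and cocompactly. The plan is to construct a $\Gamma$-invariant contact form on $\bbH^{2 n + 1}$ whose Tanaka--Webster scalar curvature is identically zero. It descends to a contact form $\theta_{0}$ on $M$ with $R_{\theta_{0}} = 0$, so $\frakF(\theta_{0}) = 0$ and $Y(M, T^{1, 0} M) \leq 0$. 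Separately we show $Y \geq 0$.

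\textbf{Step 1: $\Gamma$ lies in $\bbH^{2 n + 1} \rtimes U(n)$.} Write $\gamma \in \Gamma$ as $(h, \lambda A)$ with $\lambda > 0$ and $A \in U(n)$. The standard contact form satisfies $\gamma^{\ast} \theta = \lambda^{2} \theta$, since dilations scale $\theta$ by $\lambda^{2}$ and left translations and $U(n)$ preserve it. Hence $\chi(\gamma) = \lambda^{2}$ is a homomorphism $\Gamma \to \bbR_{>0}$. Suppose $\lambda \neq 1$ for some $\gamma$; after replacing $\gamma$ by $\gamma^{-1}$ we may assume $\lambda < 1$. Then $\gamma$ is a contraction of $\bbH^{2 n + 1}$ with respect to a suitable homogeneous norm, so it has a fixed point by the contraction mapping principle. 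This contradicts freeness of the action, which holds because $\gamma \neq 1$ is a deck transformation. This step is where the main content lies, so I would make the contraction argument concrete: a similarity $x \mapsto h \cdot \delta_{\lambda} A x$ of the Carnot--Carath\'eodory metric with ratio $\lambda < 1$ has a unique fixed point, and the CC metric is complete. Therefore $\chi \equiv 1$, and $\Gamma$ preserves $\theta$.

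\textbf{Step 2: $Y \leq 0$.} The standard contact form on $\bbH^{2 n + 1}$ is flat, so its Tanaka--Webster curvature vanishes and $R_{\theta} = 0$. It is $\Gamma$-invariant by Step 1, so it descends to $\theta_{0}$ on $M$ with $R_{\theta_{0}} = 0$. Thus $\frakF(\theta_{0}) = 0$ and $Y(M, T^{1, 0} M) \leq 0$.

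\textbf{Step 3: $Y \geq 0$.} For $\whxth = u^{2 / n} \theta_{0}$ with $u > 0$, the transformation law for $R$ gives
\[
	\int_{M} R_{\whxth} \, \vol_{\whxth} = (2 + 2 / n) \int_{M} u \Delta_{b} u \, \vol_{\theta_{0}} = (2 + 2 / n) \int_{M} \abs{d_{b} u}^{2} \, \vol_{\theta_{0}} \geq 0,
\]
using $\vol_{\whxth} = u^{2 + 2 / n} \vol_{\theta_{0}}$ and integration by parts on the compact manifold $M$. Hence $\frakF \geq 0$ everywhere, and combined with Step 2 we obtain $Y(M, T^{1, 0} M) = 0$.

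The expected obstacle is Step 1, namely excluding non-isometric similarities in $\Gamma$. An alternative route, avoiding Step 1, is to use cocompactness directly: a homomorphism $\chi$ with $\chi(\gamma) \neq 1$ would give an element whose iterates move a compact fundamental domain toward a fixed point while shrinking it. This contradicts the fact that translates of a fundamental domain have bounded-below size in a cocompact free action.
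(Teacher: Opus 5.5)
Your proposal is correct and follows the paper's proof: show that $\pi_{1}(M)$ lies in $\bbH^{2 n + 1} \rtimes U(n)$, so the flat standard contact form descends to $M$, which gives $Y(M, T^{1, 0} M) = 0$. The differences are that the paper cites Burns--Shnider (Proposition 5.6) for the first step while you prove it directly (a dilation factor $\lambda \neq 1$ makes $\gamma$ a Carnot--Carath\'{e}odory similarity with a fixed point, contradicting freeness), and that you write out the integration by parts giving $Y \geq 0$, which the paper leaves implicit.
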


\begin{proof}
	The fundamental group $\pi_{1}(M)$ can be identified with a discrete subgroup
	of the CR automorphism group of the Heisenberg group.
	Since $\pi_{1}(M)$ acts properly discontinuously on the Heisenberg group,
	this is a subgroup of $\bbH^{2 n + 1} \rtimes U(n)$~\cite{Burns-Shnider1976}*{Proposition 5.6}.
	In particular,
	$\pi_{1}(M)$ preserves the standard contact form on $\bbH^{2 n + 1}$,
	and so $(M, T^{1, 0} M)$ admits a contact form with vanishing Tanaka--Webster curvature.
	This implies $Y(M, T^{1, 0} M) = 0$.
\end{proof}

\section{CR Yamabe problem}
\label{section:CR-Yamabe-problem}

Let $(M, T^{1, 0} M)$ be a compact, connected, strictly pseudoconvex CR manifold of dimension $2 n + 1$.
Consider the CR Yamabe functional $\frakF$ given by \cref{eq:CR-Yamabe-functional}.
Note that $\frakF(c \theta) = \frakF(\theta)$ for any constant $c > 0$.
As with the Yamabe problem,
the functional $\frakF(\theta)$ can be written as a non-linear Rayleigh quotient
\begin{equation}
	\frakR_{\theta}(u)
	\coloneqq \frakF(u^{2 / n} \theta)
	= \frac{\int_{M} \sbra*{(2 + 2 / n) \abs{d u}_{\theta}^{2} + R_{\theta} u^{2}} \vol_{\theta}}
		{(\int_{M} u^{2 + 2 / n} \, \vol_{\theta})^{n / (n + 1)}},
\end{equation}
where $\abs{d u}_{\theta}^{2} = L_{\theta}^{\ast}(d u|_{H M}, d u|_{H M})$~\cite{Jerison-Lee1987}.
The infimum of $\frakR_{\theta}$ over $C^{\infty}_{+}(M)$ defines the \emph{CR Yamabe constant}
\begin{equation}
	Y(M, T^{1, 0} M)
	\coloneqq \inf_{u \in C^{\infty}_{+}(M)} \frakR_{\theta}(u),
\end{equation}
which is a CR invariant of $(M, T^{1, 0} M)$.
Analogous to the conformal case,
one has
\begin{equation}
	Y(M, T^{1, 0} M)
	\leq Y(S^{2 n + 1}, T^{1, 0} S^{2 n + 1})
	= 2 n (n + 1) \pi
	\eqqcolon Y_{n};
\end{equation}
see \cite{Jerison-Lee1987}*{Theorem 3.4(b)} and \cite{Jerison-Lee1988}*{Corollary C}.
Moreover,
the functional $\frakR_{\theta}$ has a minimizer
if $Y(M, T^{1, 0} M) < Y_{n}$~\cite{Jerison-Lee1987}*{Theorem 3.4(c)}
or $(M, T^{1, 0} M) = (S^{2 n + 1}, T^{1, 0} S^{2 n + 1})$~\cite{Jerison-Lee1988}*{Theorem A}.
It is natural to ask whether $Y(M, T^{1, 0} M) < Y_{n}$ holds
for compact strictly pseudoconvex CR manifolds not CR diffeomorphic to $(S^{2 n + 1}, T^{1, 0} S^{2 n + 1})$.
Jerison and Lee~\cite{Jerison-Lee1989}*{Theorem A} proved that
we have $Y(M, T^{1, 0} M) < Y_{n}$
if $n \geq 2$ and $(M, T^{1, 0} M)$ is not spherical.
If $(M, T^{1, 0} M)$ is spherical and $Y(M, T^{1, 0} M) > 0$,
Cheng, Chiu, and Yang~\cite{Cheng-Chiu-Yang2014} showed that
$Y(M, T^{1, 0} M) < Y_{n}$ still holds
under the assumption that the developing map $\Phi \colon \wtM \to S^{2 n + 1}$ is injective,
which automatically holds for the $n \geq 3$ case.
If $n = 1$,
combining \cite{Cheng-Malchiodi-Yang2017}*{Theorem 1.1}
and \cite{Takeuchi2020-Paneitz}*{Theorem 1.1} implies that
$Y(M, T^{1, 0} M) < Y_{1}$ if $(M, T^{1, 0} M)$ is embeddable.
Note that the Rossi sphere,
an example of a non-embeddable CR manifold,
has no minimizers of the functional $\frakR_{\theta}$~\cite{Cheng-Malchiodi-Yang2023}*{Theorem 1.3}.

\section{Complex hyperbolic geomerty}
\label{section:complex-hyperbolic-geometry}

In this section,
we recall some basic facts on complex hyperbolic geometry;
see \cites{Goldman1999,Corlette-Iozzi1999,Kapovich2022} and the references therein for more details.

\subsection{Complex hyperbolic Kleinian group}

Let $\Gamma$ be a discrete subgroup of $PU(n + 1, 1)$,
which is known as a \emph{complex hyperbolic Kleinian group}.
Note that $\Gamma$ is discrete if and only if
$\Gamma$ acts properly discontinuously on $\bbB_{\bbC}^{n + 1}$.
We write $X_{\Gamma}$ for the quotient of $\bbB_{\bbC}^{n + 1}$ by $\Gamma$.
The action of $\Gamma$ on $\bbB_{\bbC}^{n + 1}$ is free
if and only if $\Gamma$ is torsion-free;
in this case,
$X_{\Gamma}$ is a smooth complex manifold.

The \emph{limit set} $\Lambda_{\Gamma}$ of a discrete subgroup $\Gamma$ of $PU(n + 1, 1)$
is the set of accumulation points in $\overline{\bbB}_{\bbC}^{n + 1}$
of the $\Gamma$-orbit of any point in $\bbB_{\bbC}^{n + 1}$,
which is a closed subset of $S^{2 n + 1}$.
We call $\Gamma$ \emph{elementary} if $\# \Lambda_{\Gamma} \leq 2$;
otherwise we call $\Gamma$ \emph{non-elementary}.
If $\Gamma$ is non-elementary,
then $\Lambda_{\Gamma}$ is the smallest non-empty closed $\Gamma$-invariant subset of $S^{2 n + 1}$;
see \cite{Kapovich2022}*{Proposition 2} for example.
The complement $\Omega_{\Gamma} \coloneqq S^{2 n + 1} \setminus \Lambda_{\Gamma}$ of $\Lambda_{\Gamma}$
is called the \emph{domain of discontinuity}.
This is the largest open subset of $S^{2 n + 1}$
on which $\Gamma$ acts properly discontinuously.
If $\Omega_{\Gamma}$ is non-empty,
denote by $M_{\Gamma}$ (resp.\ $\ovX_{\Gamma}$)
the quotient of $\Omega_{\Gamma}$ (resp.\ $\bbB_{\bbC}^{n + 1} \cup \Omega_{\Gamma}$) by $\Gamma$.
The action of $\Gamma$ on $\Omega_{\Gamma}$ is free
if and only if $\Gamma$ is torsion-free;
in this case,
$\ovX_{\Gamma}$ is a smooth complex manifold with boundary $M_{\Gamma}$,
and $M_{\Gamma}$ is a spherical CR manifold.

Let $\Gamma$ be a discrete subgroup of $PU(n + 1, 1)$ satisfying $\# \Lambda_{\Gamma} \geq 2$.
The \emph{closed convex hull} $C_{\Gamma}$ of $\Lambda_{\Gamma}$
is the intersection of all closed convex subsets in $\bbB_{\bbC}^{n + 1}$
whose boundary contains $\Lambda_{\Gamma}$.
This subset is invariant under $\Gamma$,
and we say that $\Gamma$ is \emph{convex cocompact}
if the quotient of $C_{\Gamma}$ by $\Gamma$ is compact.
This condition is equivalent to that $\ovX_{\Gamma}$ is compact~\cite{Bowditch1995}.
In particular if $\Gamma$ is torsion-free and convex cocompact,
then $(M_{\Gamma}, T^{1, 0} M_{\Gamma})$ is a compact embeddable strictly pseudoconvex CR manifold.

Take $z, w \in \bbB_{\bbC}^{n + 1}$.
For $s > 0$,
we define
\begin{equation}
	\Phi_{s}(z, w)
	\coloneqq \sum_{\gamma \in \Gamma} e^{- s d(z, \gamma w)} \in \opcl{0}{+ \infty}.
\end{equation}
The \emph{critical exponent} of $\Gamma$ is
\begin{equation}
	\delta_{\Gamma}
	\coloneqq \inf \Set{s \in \opop{0}{+ \infty} | \Phi_{s}(z, w) < + \infty};
\end{equation}
note that the condition $\Phi_{s}(z, w) < + \infty$
is independent of the choice of $z, w \in \bbB_{\bbC}^{n + 1}$.
It is known that $0 \leq \delta_{\Gamma} \leq 2 n + 2$
and $\delta_{\Gamma} = 0$ if and only if $\Gamma$ is elementary.
Moreover,
one has $\delta_{\Gamma} = \dim_{H} \Lambda_{\Gamma}$ if $\Gamma$ is convex cocompact%
~\cite{Corlette-Iozzi1999}*{Theorem 6.1}.

Nayatani~\cite{Nayatani1999}, Yue~\cite{Yue1999}, and Wang~\cite{Wang2003} have independently
constructed a $\Gamma$-invariant contact form $\theta_{\Gamma}$ on $\Omega_{\Gamma}$
by using the Patterson--Sullivan measure of $\Gamma$.
If $\Gamma$ is torsion-free,
this contact form descends to that on $M_{\Gamma}$,
which we continue to write $\theta_{\Gamma}$ by abuse of notation.

\begin{theorem}[\cite{Nayatani1999}*{Theorem 2.4(ii)}]
	If $\# \Lambda_{\Gamma} \geq 2$ and $\delta_{\Gamma} < n$,
	then the Tanaka--Webster scalar curvature of $\theta_{\Gamma}$ is positive
	unless the limit set $\Lambda_{\Gamma}$ lies properly in a chain.
	If $\Lambda_{\Gamma}$ lies properly in a chain $C$,
	then the Tanaka--Webster scalar curvature of $\theta_{\Gamma}$ is non-negative
	and vanishes precisely on $C \setminus \Lambda_{\Gamma}$.
\end{theorem}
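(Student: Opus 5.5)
The plan is to make Nayatani's construction explicit and compute the Tanaka--Webster scalar curvature of $\theta_{\Gamma}$ directly, writing it as a double integral against the Patterson--Sullivan measure whose integrand is manifestly non-negative.

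\emph{Setup.} Write $\theta_{0}$ for the standard contact form on $S^{2n+1}$, with $R_{\theta_0}=n(n+1)/2$ in the normalization of \cref{eq:singular-CR-Yamabe-problem}. Let $\mu$ be a Patterson--Sullivan measure of dimension $\delta=\delta_{\Gamma}$ supported on $\Lambda_{\Gamma}$. Each $\gamma\in PU(n+1,1)$ satisfies $\gamma^{\ast}\theta_0=\abs{\gamma'}\theta_0$ and
\[
\abs{1-\langle \gamma x,\gamma y\rangle}=\abs{\gamma'(x)}^{1/2}\abs{\gamma'(y)}^{1/2}\abs{1-\langle x,y\rangle}.
\]
Set
\[
\phi(x)\coloneqq\int_{\Lambda_\Gamma}\abs{1-\langle x,y\rangle}^{-\delta}\,d\mu(y),\qquad x\in\Omega_\Gamma .
\]
The conformality $\gamma^\ast\mu=\abs{\gamma'}^{\delta}\mu$ (up to the normalization conventions) gives $\phi\circ\gamma=\abs{\gamma'}^{-\delta/2}\phi$. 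Hence $\theta_\Gamma=\phi^{2/\delta}\theta_0$ is $\Gamma$-invariant, and in the notation of \cref{subsection:TW-connection} we have $\theta_\Gamma=u^{2/n}\theta_0$ with $u=\phi^{n/\delta}$. I will not need the finer properties of $\mu$, only that it is a finite positive measure on $\Lambda_\Gamma$; the case $\#\Lambda_\Gamma\ge 2$ ensures $\mu$ is not a single atom.

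\emph{Computation.} Put $K_y(x)=\abs{1-\langle x,y\rangle}$. The first step is an explicit formula for $\Delta_b K_y^{-s}$ and for $\langle d\log K_y, d\log K_z\rangle_{\theta_0}$ on $S^{2n+1}\setminus\{y,z\}$. It is cleanest to compute $\Delta_b K_y^{-s}$ after a Cayley transform sending $y$ to infinity. For $s=n$ this recovers the fact that $K_y^{-n}$ is the Green's function of the CR Yamabe operator $(2+2/n)\Delta_b+R_{\theta_0}$. For general $s$ I expect an identity of the form
\[
\bigl((2+2/n)\Delta_b+R_{\theta_0}\bigr)K_y^{-s}=c_n\, s(n-s)\,K_y^{-s}\,\abs{\partial_b \log K_y}^2
\]
(or an equivalent pointwise formula). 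Next, expand $\Delta_b\phi^{n/\delta}=\tfrac n\delta\phi^{n/\delta-1}\Delta_b\phi-\tfrac n\delta(\tfrac n\delta-1)\phi^{n/\delta-2}\abs{d\phi}^2$ and insert $\phi=\int K_y^{-\delta}\,d\mu$, so that $\abs{d\phi}^2$ becomes a double integral over $(y,z)$. Symmetrizing in $(y,z)$, in the spirit of Lagrange's identity, I aim to obtain
\[
R_{\theta_\Gamma}\,u^{1+2/n}=C\,(n-\delta)\,\phi^{n/\delta-2}\iint K_y^{-\delta}K_z^{-\delta}\,
\abs{\partial_b\log K_y-\partial_b\log K_z}^2\,d\mu(y)\,d\mu(z),
\]
with $C>0$. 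Since $\delta<n$, this immediately gives $R_{\theta_\Gamma}\ge0$.

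\emph{Equality case.} At $x\in\Omega_\Gamma$, $R_{\theta_\Gamma}(x)=0$ forces $\partial_b\log K_y(x)=\partial_b\log K_z(x)$ for $\mu\otimes\mu$-a.e.\ $(y,z)$, hence for all $y,z$ in $\operatorname{supp}\mu=\Lambda_\Gamma$ by continuity. In the Heisenberg picture with $x$ at the origin, $\partial_b\log K_y(x)$ is a fixed multiple of the horizontal (complex) projection of the point $y$. Equality of these projections, after accounting for the phase, says that $x,y,z$ lie on a common complex line through $x$; that is, $\{x\}\cup\Lambda_\Gamma$ lies in one chain. So if $\Lambda_\Gamma$ is not contained in a chain, $R_{\theta_\Gamma}>0$ everywhere. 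If $\Lambda_\Gamma$ lies properly in a chain $C$, then $R_{\theta_\Gamma}$ vanishes exactly at the points $x\in C\cap\Omega_\Gamma=C\setminus\Lambda_\Gamma$ and is positive elsewhere.

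\emph{Main obstacle.} The main obstacle is the pointwise computation in the second step. One must get the exact constants, including the Reeb-direction contributions hidden in $\abs{1-\langle x,y\rangle}$ and the mixed term $\langle\partial_b K_y^{-\delta},\partial_b K_z^{-\delta}\rangle$, so that everything collapses to the symmetric square with the coefficient $n-\delta$. I would first verify the identity in the two extreme cases: $\Lambda_\Gamma$ a single point, and $\Lambda_\Gamma$ two points (where the answer should reproduce Nayatani's $\bbC$-sphere example with $k=0$). Only then would I do the general symmetrization.
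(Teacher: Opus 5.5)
The paper does not prove this statement; it is quoted from Nayatani. Judged on its own merits, your strategy works for $\delta_\Gamma>0$, and the double-integral identity you aim for is exactly right. Write $\abs{\xi}^2=h^{\alpha\bar\beta}\xi_\alpha\overline{\xi_\beta}$ for the Levi-form contraction of $\theta_0$, and $L_{\theta_0}=(2+2/n)\Delta_b+R_{\theta_0}$. Then
\[
L_{\theta_0}u=2(n+1)(n-\delta)\,\phi^{n/\delta-2}\iint K_y^{-\delta}K_z^{-\delta}\abs{\partial_b\log K_y-\partial_b\log K_z}^2\,d\mu(y)\,d\mu(z).
\]
Two intermediate claims need correcting, though neither breaks the argument.

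\begin{itemize}
\item Your single-kernel identity holds only on the flat Heisenberg background. On $(S^{2n+1},\theta_0)$ it carries a zero-order term:
\[
L_{\theta_0}K_y^{-s}=(n-s)K_y^{-s}\bigl(\tfrac{4(n+1)s}{n}\abs{\partial_b\log K_y}^2+\tfrac1n R_{\theta_0}\bigr).
\]
Test $s=0$: there $L_{\theta_0}1=R_{\theta_0}\neq0$, while your right-hand side vanishes. These zero-order terms cancel exactly against $R_{\theta_0}u$ in the final formula.
\item In the equality case, $\partial_b\log K_y(x)$ is not a $y$-independent multiple of the horizontal projection of $y$. In Heisenberg coordinates centred at $x$ it equals, up to sign and a $y$-independent term, $\bar z_y/(\abs{z_y}^2+\sqrt{-1}\,t_y)$. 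This is the conjugate of the $\bbC^n$-component of the Kor\'anyi inversion of $y$ about $x$. That inversion carries chains through $x$ to the vertical lines $\{z=\mathrm{const}\}$, which gives precisely ``$\{x\}\cup\Lambda_\Gamma$ lies in one chain''.
\end{itemize}
You also do use $\operatorname{supp}\mu=\Lambda_\Gamma$, which follows from minimality of $\Lambda_\Gamma$ and the transformation law of $\mu$. So it is not true that only finiteness of $\mu$ is needed.

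The genuine gap is the case $\delta_\Gamma=0$. Given $\#\Lambda_\Gamma\ge2$, this happens exactly when $\Gamma$ is elementary with $\Lambda_\Gamma=\{p,q\}$. There $\theta_\Gamma=\phi^{2/\delta}\theta_0$ and $u=\phi^{n/\delta}$ are undefined, so your argument says nothing about this case. The case is not vacuous for the paper: it is $S^0_{\bbR}$, i.e.\ $k=0$ in \cref{thm:infinite-geometric-moduli-real-hyperbolic}.

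Your proposed two-point sanity check is also misattributed. Nayatani's $k=0$ $\bbC$-sphere example has $\Lambda_\Gamma=S^1_{\bbC}$, a whole chain with $\delta_\Gamma=2$ and $R>0$ everywhere, not two points.

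The repair is to take the $\delta\to0$ limit of your definition, namely the geometric mean. Let $\mu$ be the normalized counting measure on $\{p,q\}$ and set $\theta_\Gamma=\exp\bigl(-2\int\log K_y\,d\mu(y)\bigr)\theta_0=(K_pK_q)^{-1}\theta_0$. This is $\Gamma$-invariant because $\abs{\gamma'(p)}\abs{\gamma'(q)}=1$. Write $u=e^{-nL}$ with $L=\int\log K_y\,d\mu$. The same computation, together with the variance identity, gives
\[
R_{\theta_\Gamma}=2n(n+1)e^{2L}\iint\abs{\partial_b\log K_y-\partial_b\log K_z}^2\,d\mu\,d\mu .
\]
This is a positive multiple of $K_pK_q\abs{\partial_b\log K_p-\partial_b\log K_q}^2$. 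It vanishes exactly on the chain through $p$ and $q$ minus $\{p,q\}$, as the statement requires.
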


In particular if $\Gamma$ is torsion-free convex cocompact and $\delta_{\Gamma} < n$,
then the CR Yamabe constant $Y(M_{\Gamma}, T^{1, 0} M_{\Gamma})$ must be positive.

\section{Infinite tower of finite connected coverings}
\label{section:infinite-tower}

\subsection{Profinite completion and residual finiteness}

To prove our main theorem,
we consider an infinite tower of finite connected coverings.
The existence of such a tower is equivalent to the requirement
that the fundamental group has infinitely many finite-index subgroups.
The profinite completion encodes the information of the finite-index normal subgroups.

\begin{definition}
	Let $\Gamma$ be a discrete group.
	The \emph{profinite completion} $\whxcg$ of $\Gamma$ is the inverse limit
	\begin{equation}
		\whxcg
		\coloneqq \underset{\substack{N \triangleleft \Gamma \\ [\Gamma : N] < \infty}}{\varprojlim} \Gamma / N.
	\end{equation}
	The group $\whxcg$ is a compact, Hausdorff, and totally disconnected topological group.
	It has a natural morphism $\iota_{\Gamma} \colon \Gamma \to \whxcg$ with dense image.
	We say that $\Gamma$ has \emph{infinite profinite completion}
	if $\whxcg$ is infinite,
	or equivalently,
	$\Gamma$ has infinitely many finite-index normal subgroups.
\end{definition}

The property that the profinite completion is infinite is preserved under surjective homomorphisms.

\begin{lemma}
\label{lem:sujective-infinite-profinite-completion}
	Let $\Gamma$ and $\Gamma^{\prime}$ be discrete groups
	and $\phi \colon \Gamma^{\prime} \to \Gamma$ be a surjective group homomorphism.
	If $\Gamma$ has infinite profinite completion,
	then so does $\Gamma^{\prime}$.
\end{lemma}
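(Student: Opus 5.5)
The plan is to pull back finite-index normal subgroups along $\phi$ and check that this gives an injective map. Suppose $\whxcg$ is infinite, so $\Gamma$ has infinitely many distinct normal subgroups $N$ of finite index. For each such $N$, let $N^{\prime} \coloneqq \phi^{-1}(N)$. This is a normal subgroup of $\Gamma^{\prime}$, being the preimage of a normal subgroup under a homomorphism.

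Next I would check that $N^{\prime}$ has finite index. The composite $\Gamma^{\prime} \to \Gamma \to \Gamma / N$ is surjective, because $\phi$ is surjective, and its kernel is exactly $N^{\prime}$. Hence $\Gamma^{\prime} / N^{\prime} \cong \Gamma / N$, which is finite.

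The remaining point is that the assignment $N \mapsto \phi^{-1}(N)$ is injective. Surjectivity of $\phi$ gives $\phi(\phi^{-1}(N)) = N$, so $N$ is recovered from $N^{\prime}$. Distinct $N$ therefore yield distinct $N^{\prime}$, and $\Gamma^{\prime}$ has infinitely many finite-index normal subgroups. By the equivalence stated in the definition, $\widehat{\Gamma^{\prime}}$ is infinite.

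None of these steps is a real obstacle; the argument is elementary. The only place where surjectivity of $\phi$ is used, and is genuinely needed, is the identity $\phi(\phi^{-1}(N)) = N$ in the injectivity step.
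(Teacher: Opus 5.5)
Your argument is correct, but it takes a different route from the paper.

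\textbf{Your route.} You work entirely with finite quotients. Pulling back along $\phi$ injects the finite-index normal subgroups of $\Gamma$ into those of $\Gamma^{\prime}$, with $\Gamma^{\prime}/\phi^{-1}(N) \cong \Gamma/N$. At both ends you then use the equivalence between $\widehat{\Gamma}$ being infinite and $\Gamma$ having infinitely many finite-index normal subgroups, which the paper asserts in its definition.

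\textbf{The paper's route.} The paper works with the completions themselves. The homomorphism $\phi$ induces a continuous homomorphism $\widehat{\phi} \colon \widehat{\Gamma^{\prime}} \to \widehat{\Gamma}$. Its image is compact, hence closed in the Hausdorff group $\widehat{\Gamma}$. It also contains the dense subgroup $\iota_{\Gamma}(\phi(\Gamma^{\prime})) = \iota_{\Gamma}(\Gamma)$. So $\widehat{\phi}$ is surjective, and $\widehat{\Gamma^{\prime}}$ is infinite.

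\textbf{Comparison.} The paper's argument uses the defining condition $\# \widehat{\Gamma} = \infty$ directly and gives the stronger conclusion that $\widehat{\phi}$ is onto. The price is functoriality and the topology of profinite groups. Yours is purely algebraic and shorter. Its validity rests on the subgroup-counting characterization, which the paper states without proof. That characterization is true:
\begin{itemize}
\item If $\Gamma$ had only finitely many finite-index normal subgroups, their intersection $N_{0}$ would be the least one, and the inverse limit would be the finite group $\Gamma/N_{0}$.
\item Conversely, if $\widehat{\Gamma}$ is finite, then $\iota_{\Gamma}$ is onto, since its image is dense in a finite Hausdorff space. Hence its kernel, the intersection of all finite-index normal subgroups, has finite index. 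Every finite-index normal subgroup contains this kernel, so there are only finitely many of them.
\end{itemize}
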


\begin{proof}
	The group homomorphism $\phi$ induces a continuous group homomorphism
	$\whxph \colon \whxcg^{\prime} \to \whxcg$.
	Since $\whxcg^{\prime}$ is compact and $\whxcg$ is Hausdorff,
	the image $\whxph(\whxcg^{\prime})$ is closed in $\whxcg$.
	Moreover,
	$\whxph(\whxcg^{\prime})$ contains
	\begin{equation}
		\whxph(\iota_{\Gamma^{\prime}}(\Gamma^{\prime}))
		= \iota_{\Gamma}(\phi(\Gamma^{\prime}))
		= \iota_{\Gamma}(\Gamma),
	\end{equation}
	which is dense in $\whxcg$.
	Hence $\whxph(\whxcg^{\prime})$ coincides with $\whxcg$
	and so $\whxph$ is surjective.
	It follows from $\# \whxcg = \infty$ that $\# \whxcg^{\prime} = \infty$.
\end{proof}

We next consider residual finiteness,
which means that the intersection of all finite-index normal subgroups is trivial.

\begin{definition}
	Let $\Gamma$ be a discrete group.
	It is said to be \emph{residually finite}
	if for any $\gamma \in \Gamma \setminus \{e\}$,
	there exists a normal subgroup $N \triangleleft \Gamma$ with finite index
	such that $\gamma \notin N$.
\end{definition}

This condition is equivalent to the injectivity of $\iota_{\Gamma} \colon \Gamma \to \whxcg$.
Note that $\Gamma$ has infinite profinite completion
if $\Gamma$ is infinite and residually finite.
It is known that a finitely generated linear subgroup is residually finite.

\begin{lemma}[Selberg--Mal'cev lemma~\cite{Ratcliffe2019}*{Section 7.6}]
\label{lem:Selberg-Malcev}
	If $\Gamma \subset GL(m, \bbC)$ is a finitely generated linear subgroup,
	then $\Gamma$ is residually finite.
\end{lemma}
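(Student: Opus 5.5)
We argue as in the classical proof, by reducing matrix entries modulo a maximal ideal of a finitely generated ring. Let $g_{1}, \dots , g_{k}$ be generators of $\Gamma$. Let $R \subset \bbC$ be the subring generated over $\bbZ$ by all entries of $g_{1}^{\pm 1}, \dots , g_{k}^{\pm 1}$. Then $R$ is a finitely generated commutative $\bbZ$-algebra and an integral domain, and $\Gamma \subset GL(m, R)$, since every element of $\Gamma$ is a word in the $g_{i}^{\pm 1}$. For any ideal $I \subset R$, reduction of entries modulo $I$ gives a group homomorphism
\begin{equation}
	\rho_{I} \colon GL(m, R) \to GL(m, R / I).
\end{equation}
Its restriction to $\Gamma$ has kernel $\Gamma \cap \Ker \rho_{I}$, which is a normal subgroup of $\Gamma$. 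This kernel has finite index whenever $R / I$ is finite.

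Now fix $\gamma \in \Gamma \setminus \{e\}$. Then $\gamma - I_{m}$ has some nonzero entry $a \in R$. We want a maximal ideal $\frakm \subset R$ with $a \notin \frakm$. Since $R$ is a finitely generated $\bbZ$-algebra, it is a Jacobson ring, so its Jacobson radical equals its nilradical. The nilradical is zero because $R$ is a domain. Hence the nonzero element $a$ lies outside some maximal ideal $\frakm$. (Equivalently, apply the same argument to the localization $R[a^{-1}] \neq 0$: take any maximal ideal of it and contract back to $R$.) For this $\frakm$ we have $\rho_{\frakm}(\gamma) \neq I_{m}$, because the entry $a$ of $\gamma - I_{m}$ survives in $R / \frakm$.

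It remains to show that $R / \frakm$ is finite. This is the main obstacle, and it is where the finite generation of $\Gamma$ is genuinely used. The ring $R / \frakm$ is a field that is finitely generated as a ring over $\bbZ$. By the general (arithmetic) Nullstellensatz, such a field is finite. The argument I have in mind has two cases.
\begin{itemize}
	\item If $R / \frakm$ had characteristic $0$, it would be a finitely generated $\bbQ$-algebra that is a field. By Zariski's lemma it would then be a finite extension of $\bbQ$. A standard argument then forbids $\bbQ$ from being finitely generated as a ring over $\bbZ$ inside such a field: only finitely many primes can appear in denominators. So this case is impossible.
	\item Hence the characteristic is some prime $p$. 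Then $R / \frakm$ is a finitely generated $\mathbb{F}_{p}$-algebra that is a field, so by Zariski's lemma it is a finite extension of $\mathbb{F}_{p}$, and therefore finite.
\end{itemize}

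Therefore $N \coloneqq \Gamma \cap \Ker \rho_{\frakm}$ is a normal subgroup of $\Gamma$ of finite index with $\gamma \notin N$. Since $\gamma \neq e$ was arbitrary, $\Gamma$ is residually finite.
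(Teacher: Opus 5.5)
Your proof is correct and is the classical argument: reduce modulo a maximal ideal of the finitely generated ring $R$ of matrix entries, using that $R$ has zero Jacobson radical and finite residue fields; this is the standard proof in the cited source (Ratcliffe, Section 7.6), and the paper itself gives no proof beyond that citation. One small imprecision: in your parenthetical, the contraction to $R$ of a maximal ideal of $R[a^{-1}]$ is maximal only because of the finiteness of residue fields that you establish afterwards, but since all you need is an ideal $I$ with $a \notin I$ and $R/I$ finite, this is harmless.
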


The group $PU(n + 1, 1)$,
which can be identified with $\Aut_{\CR}(S^{2 n + 1}, T^{1, 0} S^{2 n + 1})$,
is not a subgroup of $GL(n + 2, \bbC)$.
However,
the same statement as \cref{lem:Selberg-Malcev} still holds.

\begin{proposition}
\label{prop:residually-finite-PU}
	If $\Gamma \subset PU(n + 1, 1)$ is a finitely generated discrete subgroup,
	then $\Gamma$ is residually finite.
\end{proposition}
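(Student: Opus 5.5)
The plan is to reduce to \cref{lem:Selberg-Malcev} by making $\Gamma$ linear, using the adjoint representation. The group $PU(n + 1, 1)$ is the quotient of $U(n + 1, 1)$ by its center $Z = \Set{\lambda I_{n + 2} | \abs{\lambda} = 1}$, so $PU(n + 1, 1)$ has trivial center. Consider the adjoint action of $U(n + 1, 1)$ on its Lie algebra $\mathfrak{u}(n + 1, 1) \subset \mathfrak{gl}(n + 2, \bbC)$ by $A \mapsto (X \mapsto A X A^{-1})$. Scalars act trivially, so this descends to a continuous homomorphism
\begin{equation}
	\mathrm{Ad} \colon PU(n + 1, 1) \to GL(\mathfrak{u}(n + 1, 1)) \subset GL(m, \bbC), \qquad m = \dim_{\bbR} \mathfrak{u}(n + 1, 1) = (n + 2)^{2}.
\end{equation}
Here the real vector space is complexified to land in $GL(m, \bbC)$; equivalently, one can use $X \mapsto A X A^{-1}$ on all of $\mathfrak{gl}(n + 2, \bbC)$.

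The key step is to show that $\mathrm{Ad}$ is injective. Suppose $A X A^{-1} = X$ for all $X \in \mathfrak{u}(n + 1, 1)$. Then $A$ commutes with $\mathfrak{u}(n + 1, 1)$. Its complex span is $\mathfrak{u}(n + 1, 1) \otimes \bbC = \mathfrak{gl}(n + 2, \bbC)$, since $\mathfrak{u}(p, q)$ is a real form of $\mathfrak{gl}(p + q, \bbC)$. Hence $A$ commutes with all matrices, so $A$ is a scalar, and its class in $PU(n + 1, 1)$ is trivial. If one works on $\mathfrak{gl}(n + 2, \bbC)$ directly, this is immediate. The only point requiring care is checking that the real form spans; this holds because every matrix $Y$ decomposes as $Y = X_{1} + \sqrt{-1} X_{2}$ with $X_{1}, X_{2} \in \mathfrak{u}(n + 1, 1)$, taking $X_{1} = (Y - Y^{\dagger})/2$ and $X_{2} = (Y + Y^{\dagger})/(2\sqrt{-1})$, where $Y^{\dagger} \coloneqq J Y^{\ast} J$ and $J = \diag(-1, 1, \dots, 1)$ is the $J$-adjoint.

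Given injectivity, $\Gamma \cong \mathrm{Ad}(\Gamma)$. The image $\mathrm{Ad}(\Gamma)$ is a finitely generated subgroup of $GL(m, \bbC)$, being the homomorphic image of the finitely generated group $\Gamma$. By \cref{lem:Selberg-Malcev}, $\mathrm{Ad}(\Gamma)$ is residually finite, and residual finiteness is invariant under isomorphism, so $\Gamma$ is residually finite. Discreteness is not even needed. I expect no serious obstacle; the only thing to verify is that the center of $U(n + 1, 1)$ consists exactly of scalars, so that the kernel of $\mathrm{Ad}$ is trivial on $PU(n + 1, 1)$, and this is what the spanning argument above gives.
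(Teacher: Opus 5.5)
Your proposal is correct and is the paper's argument: both embed $PU(n+1,1)$ into $GL(\frakg)$ through the adjoint representation of $U(n+1,1)$, whose kernel is the center (the unit scalars), and then apply the Selberg--Mal'cev lemma to the finitely generated image. Your spanning argument, that the complex span of $\frakg$ is the full matrix algebra, verifies the kernel claim that the paper only asserts, and you are right that discreteness is not needed.
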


\begin{proof}
	Let $\frakg$ be the Lie algebra of $U(n + 1, 1)$
	and $\Ad \colon U(n + 1, 1) \to GL(\frakg)$ be the adjoint representation.
	The kernel of $\Ad$ coincides with the center $Z$ of $U(n + 1, 1)$,
	and $U(n + 1, 1) / Z$ is isomorphic to $PU(n + 1, 1)$.
	Hence we obtain an injective group homomorphism $PU(n + 1, 1) \to GL(\frakg)$.
	The Selberg--Mal'cev lemma implies that any finitely generated discrete subgroup of $PU(n + 1, 1)$
	is residually finite.
\end{proof}

\subsection{Infinite tower of finite connected coverings}

Let $M$ be a compact and connected manifold.
By an \emph{infinite tower} of finite connected coverings of degree $(m_{j})_{j = 1}^{\infty}$ over $M$,
we mean a sequence $(\pi_{j} \colon M_{j} \to M_{j - 1})_{j = 1}^{\infty}$
of finite connected coverings satisfying
$M_{0} = M$ and $m_{j} \coloneqq \deg \pi_{j} \geq 2$.
For $0 \leq l \leq j$,
set
\begin{equation}
	\Pi_{j}^{l}
	\coloneqq \pi_{l + 1} \circ \dots \circ \pi_{j} \colon M_{j} \to M_{l}
\end{equation}
with convention $\Pi_{j}^{j} = \Id_{M_{j}}$.
Note that $\Pi_{j}^{l}$ is also a covering map of degree $m_{l + 1} \dotsm m_{j}$.
We say the infinite tower $(\pi_{j} \colon M_{j} \to M_{j - 1})_{j = 1}^{\infty}$ is \emph{regular}
if the covering $\Pi_{j}^{0} \colon M_{j} \to M_{0} = M$ is regular for each $j \in \bbZ_{> 0}$,
or equivalently,
$(\Pi_{j}^{0})_{\ast} \pi_{1}(M_{j})$ is a normal subgroup of $\pi_{1}(M)$.
Let $\wtM$ be the universal cover of $M$
and let $\wtxp_{j} \colon \wtM \to M_{j}$ be the universal covering map of $M_{j}$.
Note that $\wtM$ must be non-compact since $\deg \Pi_{j}^{0} \to \infty$ as $j \to \infty$.

Assume that the fundamental group $\pi_{1}(M)$ of $M$ has infinite profinite completion.
Under this assumption,
Bettiol and Piccione~\cite{Bettiol-Piccione2018}*{Section 3.3} constructed
an infinite tower of finite connected coverings over $M$ as follows.
There exists a nested sequence
\begin{equation}
	\dots \subset \Gamma_{j} \subset \dots \subset \Gamma_{1} \subset \Gamma_{0} = \pi_{1}(M)
\end{equation}
of normal subgroups of finite index at least two.
Then $M_{j} \coloneqq \wtM / \Gamma_{j}$ has the canonical finite covering
$\pi_{j} \colon M_{j} \to M_{j - 1}$ of degree $[\Gamma_{j - 1} : \Gamma_{j}]$,
which defines a regular infinite tower of finite connected coverings
of degree $(m_{j} = [\Gamma_{j - 1} : \Gamma_{j}])_{j = 1}^{\infty}$ over $M$.

\section{Proof of \cref{thm:infinite-geometric-moduli-general}}
\label{section:proof-of-main-result}

In this section,
we provide a condition,
in a general setting,
under which the geometric moduli space of complete contact forms
with constant Tanaka--Webster scalar curvature on the universal cover is infinite.

\begin{proposition}
\label{prop:existence-of-tower-implies-Heisenberg}
	Let $(M, T^{1, 0} M)$ be a compact, connected, strictly pseudoconvex CR manifold of dimension $2 n + 1$.
	Suppose there are an infinite tower $(\pi_{j} \colon M_{j} \to M_{j - 1})_{j = 1}^{\infty}$
	of finite connected coverings of degree $(m_{j})_{j = 1}^{\infty}$ over $M$
	and a sequence $(\theta_{j})_{j = 0}^{\infty}$
	of unit volume CR Yamabe minimizers on $(M_{j}, T^{1, 0} M_{j})$
	such that for each $j \in \bbZ_{> 0}$,
	there exist $\Phi_{j} \in \Aut_{\CR}(\wtM, T^{1, 0} \wtM)$ and $c_{j} \in \bbR_{> 0}$
	satisfying $\Phi_{j}^{\ast} \wtxp^{\ast} \theta_{0} = c_{j} \wtxp_{j}^{\ast} \theta_{j}$.
	Then $Y(M, T^{1, 0} M) \leq 0$.
\end{proposition}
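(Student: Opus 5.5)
We argue by contradiction. Suppose $Y(M, T^{1, 0} M) > 0$; the aim is to show that $\Aut_{\CR}(\wtM, T^{1, 0} \wtM)$ does not act properly on $\wtM$. Then \cref{thm:Schoen's-theorem} will force $\wtM$ to be the sphere or the Heisenberg group. The sphere is excluded because $\wtM$ is non-compact. The Heisenberg group is excluded by \cref{lem:CR-Yamabe-constant-Heisenberg-nilmanifold}, which would give $Y(M, T^{1, 0} M) = 0$.

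The first step compares the constants $c_{j}$ using volumes. Set $d_{j} \coloneqq \deg \Pi_{j}^{0} = m_{1} \dotsm m_{j} \to \infty$. The lift $(\Pi_{j}^{0})^{\ast} \theta_{0}$ has volume $d_{j}$ on $M_{j}$. Its Rayleigh quotient is $d_{j}^{1/(n+1)} Y(M)$, so
\begin{equation}
	Y(M_{j}) \leq d_{j}^{1/(n+1)} Y(M).
\end{equation}
The relation $\Phi_{j}^{\ast} \wtxp^{\ast} \theta_{0} = c_{j} \wtxp_{j}^{\ast} \theta_{j}$ relates the two constant Tanaka--Webster scalar curvatures. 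Since $\theta_{0}$ and $\theta_{j}$ are unit volume minimizers, $R_{\theta_{0}} = Y(M) > 0$ and $R_{\theta_{j}} = Y(M_{j})$. Pulling back by a CR diffeomorphism preserves curvature, and scaling $\theta$ by $c$ multiplies $R$ by $c^{-1}$. Hence $Y(M) = c_{j}^{-1} Y(M_{j})$, so $Y(M_{j}) > 0$ and $c_{j} = Y(M_{j})/Y(M) \leq d_{j}^{1/(n+1)}$.

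The second step produces a divergent but non-proper family of automorphisms. Choose a compact fundamental domain $K_{j}$ for $M_{j}$ in $\wtM$. Its $\wtxp_{j}^{\ast} \theta_{j}$-volume is $1$. Transported by $\Phi_{j}$, the set $\Phi_{j}(K_{j})$ therefore has $\wtxp^{\ast} \theta_{0}$-volume $c_{j}^{n+1}$. Let $F$ be a compact fundamental domain for $M$, of volume $1$ with respect to $\wtxp^{\ast} \theta_{0}$. Then $\Phi_{j}(K_{j})$ meets roughly $c_{j}^{n+1}$ translates $\gamma F$ with $\gamma \in \pi_{1}(M)$, possibly boundedly many. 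Alternatively, compose $\Phi_{j}$ with a deck transformation so that $\Phi_{j}$ maps a fixed point $x_{0}$ into $F$.

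Consider the group $G_{j} \coloneqq \Phi_{j}^{-1} \pi_{1}(M) \Phi_{j}$. It preserves $\wtxp_{j}^{\ast} \theta_{j}$ and contains $\pi_{1}(M_{j})$ with index $d_{j}$. It therefore descends to a group of order $d_{j}$ of isometries of $(M_{j}, \theta_{j})$. Dually, $\Phi_{j} \pi_{1}(M_{j}) \Phi_{j}^{-1}$ is a subgroup of index $d_{j}$ in the isometry group of $\wtxp^{\ast} \theta_{0}$.

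If $\Aut_{\CR}$ acted properly, I would use the standard fact that a proper action preserves a complete Riemannian metric; here one can simply take the Webster metric of $\wtxp^{\ast} \theta_{0}$, invariant under $\pi_{1}(M)$. The key estimate is then the following: a contact form homothetic to the fixed periodic form $\wtxp^{\ast} \theta_{0}$ by elements of a properly acting group must have bounded scaling. Indeed, properness makes the stabilizer of the conformal class orbit compact modulo $\pi_{1}(M)$. More concretely, the set of $\Phi \in \Aut_{\CR}$ with $\Phi(x_{0}) \in F$ is compact. On this compact set, the function $\Phi \mapsto$ the conformal factor of $\Phi^{\ast} \wtxp^{\ast} \theta_{0}$ relative to $\wtxp^{\ast} \theta_{0}$ at $x_{0}$ is continuous. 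It is therefore bounded above and below.

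The third step concludes. After normalizing $\Phi_{j}(x_{0}) \in F$, the ratio $\Phi_{j}^{\ast} \wtxp^{\ast} \theta_{0} / \wtxp^{\ast} \theta_{0}$ is bounded, so $c_{j}\, \wtxp_{j}^{\ast} \theta_{j}$ is uniformly comparable to $\wtxp^{\ast} \theta_{0}$ on compact sets. Moreover, $\Phi_{j}$ conjugates the periodic form $\wtxp^{\ast} \theta_{0}$ into a form invariant under $\pi_{1}(M_{j})$, and by uniqueness considerations into a form invariant under all of $\pi_{1}(M)$. Its pushdown to $M$ is then a CSC form conformal to $\theta_{0}$, of total volume $c_{j}^{n+1}/d_{j}$ on $M$.

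By compactness of the minimizers on $M$ (from $Y(M) < Y_{n}$), the volume $c_{j}^{n+1}/d_{j}$ must equal a fixed constant. This forces $Y(M_{j}) = d_{j}^{1/(n+1)} Y(M)$. Equivalently, the lift of $\theta_{0}$ minimizes on every $M_{j}$, and this is consistent with the previous step.

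The main obstacle is therefore obtaining a genuine contradiction. The route I would try is to show that $\Phi_{j}$ must lie outside every compact set modulo $\pi_{1}(M)$. The idea is that the volume normalization $c_{j}^{n+1}$ growing like $d_{j}$, combined with the fact that $\Phi_{j}$ maps the large domain $K_{j}$ onto a region of $\wtM$, shows that the $\Phi_{j}$ fail to stay in a compact family. This contradicts properness and lets Schoen's theorem finish the argument.

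I expect this last step, extracting non-properness from the sequence $\Phi_{j}$, to be the real difficulty, and I am not certain the volume bookkeeping above suffices. It may need a sharper input, such as the strict inequality $Y(M_{j}) < d_{j}^{1/(n+1)} Y(M)$ for large $j$, which would give $c_{j}^{n+1} \ll d_{j}$.
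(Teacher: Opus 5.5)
\textbf{There is a genuine gap.} The proposal never reaches a contradiction, and the step you flag as the real difficulty is exactly where an input is missing.

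\emph{Your bound on $c_j$ is the wrong one.} The estimate $c_j = Y(M_j)/Y(M) \le d_j^{1/(n+1)}$ is exactly borderline. It says that $\Phi_j(K_j)$ has $\wtxp^{\ast}\theta_0$-volume $c_j^{n+1} \le d_j$, i.e.\ at most the volume of the $d_j$ translates of $F$ that make up $K_j$. No volume bookkeeping built on it can detect non-properness. A strict inequality $Y(M_j) < d_j^{1/(n+1)} Y(M)$ would not help either: what you need is $c_j^{n+1}/d_j \to 0$.

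\emph{The missing ingredient} is the universal Jerison--Lee bound $Y(M_j) \le Y(S^{2n+1}) = Y_n$, recalled in Section 3. It gives $c_j \le Y_n / Y(M)$ uniformly in $j$. Combine this with a pigeonhole choice of translate:
\begin{itemize}
\item $K_j$ is a union of translates $\sigma F$, with $\sigma$ running over the $d_j$ coset representatives of $\pi_1(M_j)$ in $\pi_1(M)$.
\item These translates have total $\wtxp_j^{\ast}\theta_j$-volume $1$, so some $\tau_j \in \pi_1(M)$ satisfies $\Vol_{\wtxp_j^{\ast}\theta_j}(\tau_j F) \le 1/d_j$.
\item Pick $\sigma_j \in \pi_1(M)$ so that $\Psi_j \coloneqq \sigma_j \circ \Phi_j \circ \tau_j$ satisfies $\Psi_j(F) \cap F \neq \emptyset$. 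Then $\Psi_j^{\ast}\wtxp^{\ast}\theta_0 = c_j \tau_j^{\ast}\wtxp_j^{\ast}\theta_j$.
\end{itemize}
Consequently
\[
\Vol_{\wtxp^{\ast}\theta_0}(\Psi_j(F)) = c_j^{n+1}\,\Vol_{\wtxp_j^{\ast}\theta_j}(\tau_j F) \le \Bigl(\frac{Y_n}{Y(M)}\Bigr)^{n+1} d_j^{-1} \longrightarrow 0 .
\]
This is incompatible with properness. In fact your own continuity argument for the conformal factor, applied uniformly on the compact set $\overline{F}$ rather than only at the point $x_0$, gives a positive lower bound for this volume. This is precisely the paper's argument, which phrases the same computation through the scale-invariant quantity $\calA$ on $F$. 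Schoen's theorem and the Heisenberg lemma then finish the proof as you describe.

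\emph{Your third step should be discarded.} There is no ``uniqueness'' that would make $\Phi_j^{\ast}\wtxp^{\ast}\theta_0$ invariant under all of $\pi_1(M)$. When the Yamabe constant is positive, constant Tanaka--Webster curvature forms are far from unique; the whole paper exploits this. Likewise, the conclusion $Y(M_j) = d_j^{1/(n+1)} Y(M)$ does not follow from what precedes it. Had it been true, it would already contradict $Y(M_j) \le Y_n$ for large $j$; that observation is the inequality your argument was missing.
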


\begin{proof}
	Suppose to the contrary that $Y(M, T^{1, 0} M) > 0$.
	Let $F \subset \wtM$ be a fundamental domain of $\pi_{1}(M)$
	and let $\Aut(\wtxp_{j})$ be the group of deck transformations of $\wtxp_{j}$.
	Note that $\Aut(\wtxp_{j}) = \pi_{1}(M_{j})$
	is a subgroup of $\Aut(\wtxp) = \pi_{1}(M)$ of index $m_{1} \dotsm m_{j}$.
	Take $\tau_{j} \in \Aut(\wtxp)$ such that
	\begin{equation}
		\Vol_{\tau_{j}^{\ast} \wtxp_{j}^{\ast} \theta_{j}}(F)
		= \min \Set{\Vol_{\sigma^{\ast} \wtxp_{j}^{\ast} \theta_{j}}(F) | \sigma \in \Aut(\wtxp)}.
	\end{equation}
	This yields $\Vol_{\tau_{j}^{\ast} \wtxp_{j}^{\ast} \theta_{j}}(F) \leq (m_{1} \dotsm m_{j})^{- 1}$.
	Choose $\sigma_{j} \in \Aut(\wtxp)$ so that
	\begin{equation}
		\Psi_{j} \coloneqq \sigma_{j} \circ \Phi_{j} \circ \tau_{j} \in \Aut_{\CR}(\wtM, T^{1, 0} \wtM)
	\end{equation}
	satisfies $\Psi_{j}(F) \cap F \neq \emptyset$.
	Consider the contact form
	\begin{equation}
		\wtxth_{j}
		\coloneqq \Psi_{j}^{\ast} \wtxp^{\ast} \theta_{0}
		= \tau_{j}^{\ast} \Phi_{j}^{\ast} \sigma_{j}^{\ast} \wtxp^{\ast} \theta_{0}
		= \tau_{j}^{\ast} \Phi_{j}^{\ast} \wtxp^{\ast} \theta_{0}
		= c_{j} \tau_{j}^{\ast} \wtxp_{j}^{\ast} \theta_{j}
	\end{equation}
	on $\wtM$ and set
	\begin{equation}
		\calA(\wtxth_{j})
		\coloneqq \Vol_{\wtxth_{j}}(F)^{- n / (n + 1)} \int_{F} R_{\wtxth_{j}} \vol_{\wtxth_{j}}.
	\end{equation}
	On the one hand,
	\begin{equation}
	\label{eq:total-scalar-curvature-on-fundamental-domain}
		\calA(\wtxth_{j})
		= Y(M, T^{1, 0} M) \Vol_{\wtxp^{\ast} \theta_{0}}(\Psi_{j}(F))^{1 / (n + 1)}
		> 0
	\end{equation}
	since $R_{\theta_{0}} = Y(M, T^{1, 0} M)$.
	On the other hand,
	the scale invariance of $\calA$ and
	the assumption $R_{\theta_{j}} = Y(M_{j}, T^{1, 0} M_{j})$ imply that
	\begin{align}
		\calA(\wtxth_{j})
		&= \Vol_{\tau_{j}^{\ast} \wtxp_{j}^{\ast} \theta_{j}}(F)^{- n / (n + 1)}
			\int_{F} R_{\tau_{j}^{\ast} \wtxp_{j}^{\ast} \theta_{j}}
			\vol_{\tau_{j}^{\ast} \wtxp_{j}^{\ast} \theta_{j}} \\
		&= Y(M_{j}, T^{1, 0} M_{j}) \Vol_{\tau_{j}^{\ast} \wtxp_{j}^{\ast} \theta_{j}}(F)^{1 / (n + 1)} \\
		&\leq Y(M_{j}, T^{1, 0} M_{j}) (m_{1} \dotsm m_{j})^{- 1 / (n + 1)}.
	\end{align}
	It follows from $0 < Y(M_{j}, T^{1, 0} M_{j}) \leq Y(S^{2 n + 1}, T^{1, 0} S^{2 n + 1})$ that
	$\calA(\wtxth_{j}) \to 0$ as $j \to \infty$,
	and so \cref{eq:total-scalar-curvature-on-fundamental-domain} gives
	$\Vol_{\wtxp^{\ast} \theta_{0}}(\Psi_{j}(F)) \to 0$ as $j \to \infty$.
	Hence the sequence $(\Psi_{j})_{j = 1}^{\infty}$ cannot be pre-compact
	in $\Aut_{\CR}(\wtM, T^{1, 0} \wtM)$.
	This and \cref{thm:Schoen's-theorem} yield that
	$(\wtM, T^{1, 0} \wtM)$ is CR diffeomorphic to $(\bbH^{2 n + 1}, T^{1, 0} \bbH^{2 n + 1})$.
	\cref{lem:CR-Yamabe-constant-Heisenberg-nilmanifold} implies that $Y(M, T^{1, 0} M) = 0$,
	which is a contradiction.
\end{proof}

In order to prove our main result,
we show that if $(M, T^{1, 0} M)$ has positive CR Yamabe constant
and its fundamental group has infinite profinite completion,
then we can construct a finite cover $M_{j}$ of $M$
such that $\# \GModuli(M_{j}, T^{1, 0} M_{j}) \geq j$ for any $j \in \bbZ_{> 0}$.

\begin{proposition}
\label{prop:existence-of-non-homothetic}
	Let $(M, T^{1, 0} M)$ be a compact, connected, strictly pseudoconvex CR manifold of dimension $2 n + 1$
	such that \Cref{keyassumption} holds and $\pi_{1}(M)$ has infinite profinite completion.
	Then there exist a regular infinite tower $(\pi_{j} \colon M_{j} \to M_{j - 1})_{j = 1}^{\infty}$
	of finite connected coverings over $M$
	and a sequence $(\theta_{j})_{j = 0}^{\infty}$
	of unit volume CR Yamabe minimizers on $(M_{j}, T^{1, 0} M_{j})$
	such that for any $j \in \bbZ_{> 0}$,
	the contact forms $((\Pi_{j}^{l})^{\ast} \theta_{l})_{l = 0}^{j}$ on $M_{j}$
	are pairwise non-homothetic
	and have positive constant Tanaka--Webster scalar curvature.
\end{proposition}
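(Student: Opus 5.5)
We construct the tower and the minimizers inductively, using \cref{prop:existence-of-tower-implies-Heisenberg} as the engine that forces non-homothety.

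\textbf{Existence of minimizers.} Since $\pi_{1}(M)$ has infinite profinite completion, the construction of Bettiol and Piccione recalled in \cref{section:infinite-tower} gives a nested sequence of finite-index normal subgroups $\Gamma_{j}$ of $\pi_{1}(M)$. Hence we have regular towers, and we are free to pass to subsequences of the chain; a subsequence is again a regular tower. For any finite cover $M'$ of $M$, the Yamabe constant is positive, because $Y(M') \geq Y(M)(\deg)^{1/(n+1)} > 0$ by lifting and comparing Rayleigh quotients (the pullback of a positive-curvature contact form still has positive curvature). Moreover \Cref{keyassumption} passes to $M'$:
\begin{itemize}
\item for $n=1$, universal embeddability ensures $M'$ is embeddable;
\item for $n=2$ spherical, $M'$ has the same universal cover and developing map;
\item otherwise, non-sphericity lifts.
\end{itemize}
By the results summarized in \cref{section:CR-Yamabe-problem}, $Y(M')<Y_{n}$ unless $M'$ is the standard sphere. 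The sphere case is excluded, since $\pi_{1}(M)$ is infinite (it has infinitely many finite-index subgroups), so $M'$ is not simply connected. So Jerison--Lee gives a unit-volume minimizer $\theta'$ on $M'$ with $R_{\theta'} = Y(M') > 0$. Pullbacks of such forms under covering maps still have positive constant curvature.

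\textbf{Inductive choice.} Set $M_{0}=M$ and let $\theta_{0}$ be a minimizer. Suppose $M_{0},\dots,M_{j-1}$ and $\theta_{0},\dots,\theta_{j-1}$ have been chosen with the pairwise non-homothety property on $M_{j-1}$. We want to choose $M_{j}$ deeper in the chain so that $\theta_{j}$ is not homothetic on $M_{j}$ to any $(\Pi^{l}_{j})^{\ast}\theta_{l}$, $l<j$. Pairwise non-homothety of the older forms on $M_{j}$ should persist: a homothety on $M_{j}$ lifts to a CR automorphism of $\wtM$, and we want to rule it out there. So we formulate everything on $\wtM$. Let $\wtxth_{l} = \wtxp_{l}^{\ast}\theta_{l}$.

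\textbf{Main obstacle and argument by contradiction.} Suppose that for some $l$, all sufficiently deep covers $N_{k}=\wtM/\Gamma_{k}$ in the chain below $M_{j-1}$ have minimizers $\theta^{(k)}$ whose lifts are homothetic on $\wtM$ to $\wtxth_{l}$ via some $\Phi_{k}\in\Aut_{\CR}(\wtM)$. Then \cref{prop:existence-of-tower-implies-Heisenberg}, applied to the tower with base $M_{l}$ and the sequence consisting of $\theta_{l}$ followed by the $\theta^{(k)}$, gives $Y(M_{l})\le 0$. This contradicts positivity.

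A pigeonhole over the finitely many $l<j$ then lets us pick some deep $k$ for which no $l$ works. More carefully, for each $l$ the set of bad $k$ cannot contain an infinite subsequence. So all but finitely many $k$ are good for each $l$, and hence for all $l$ simultaneously.

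Take $M_{j}=N_{k}$ for such a good $k$ and $\theta_{j}=\theta^{(k)}$. Homothety on $M_{j}$ between $\theta_{j}$ and $(\Pi_{j}^{l})^{\ast}\theta_{l}$ would lift to homothety on $\wtM$, so non-homothety on $\wtM$ implies non-homothety on $M_{j}$. The older pairs on $M_{j}$ are handled the same way, provided we maintained the stronger inductive claim that the lifts $\wtxth_{0},\dots,\wtxth_{j-1}$ are pairwise non-homothetic on $\wtM$.

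The delicate point is exactly this: strengthening the induction to non-homothety of lifts on $\wtM$, and checking that \cref{prop:existence-of-tower-implies-Heisenberg} applies to subsequence towers rebased at $M_{l}$, which have degrees $\ge 2$ and universal cover $\wtM$.
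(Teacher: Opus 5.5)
Your argument is correct, but it reaches non-homothety by a different route from the paper's.

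\textbf{How the two routes differ.} The paper uses only a variational comparison. At each step it picks a finite-index normal subgroup $\Gamma\subset\pi_1(M_j)$ with $[\pi_1(M_j):\Gamma]>(Y_n/Y(M_j))^{n+1}$. Then
\[
\frakF(\pi_{j+1}^*\theta_j)=Y(M_j)[\pi_1(M_j):\Gamma]^{1/(n+1)}>Y_n\ge\frakF(\theta_{j+1}).
\]
By scale invariance, and since $\frakF$ of a pullback scales by $(\deg)^{1/(n+1)}$, this propagates to the strict chain $\frakF(\theta_j)<\frakF((\Pi_j^{j-1})^*\theta_{j-1})<\dots<\frakF((\Pi_j^0)^*\theta_0)$. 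So the CR invariant $\frakF$ alone separates the forms on $M_j$. That route needs only $Y(M_{j+1})\le Y_n$, no Schoen theorem, and gives an explicit depth for the next cover.

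You instead run the rigidity argument of \cref{prop:existence-of-tower-implies-Heisenberg} inside the induction, rebased at $M_l$ along a subsequence of the chain. Your check that it applies is right: the rebased tower has degrees $\ge 2$ and the same universal cover $\wtM$ with compatible projections. The volume-averaging step in that proof only needs $[\pi_1(M_l):\Gamma_{k_i}]\to\infty$. This gives the strictly stronger conclusion that the lifts $\widetilde{\theta}_0,\dots,\widetilde{\theta}_j$ are pairwise non-homothetic on $\wtM$. Since a homothety on $M_j$ lifts to one on $\wtM$, the statement follows.

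In effect your induction already contains the proof of \cref{thm:infinite-geometric-moduli-general}. The paper's proof of that theorem never uses the non-homothety on $M_j$, only the tower and the minimizers. Your version costs Schoen's theorem and only tells you that all but finitely many depths work. You should also say explicitly that the minimizers $\theta^{(k)}$ are fixed once and for all before you define the bad indices.

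\textbf{One false step.} Your inequality $Y(M')\ge Y(M)(\deg)^{1/(n+1)}$ is wrong. Pulling back a minimizer gives the reverse inequality $Y(M')\le Y(M)(\deg)^{1/(n+1)}$. Moreover, $Y(M')\le Y_n$ while $\deg\to\infty$ along the tower, so your inequality fails for deep covers; this failure is exactly what the paper exploits.

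Positivity of $Y(M')$ should rest on your parenthetical remark instead. The pullback of a contact form with positive Tanaka--Webster curvature still has positive curvature, and on a compact manifold this forces $Y(M')>0$, via positivity of the first eigenvalue of the CR conformal sub-Laplacian.

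Also, in your case list the $n\ge 3$ spherical case is not covered by ``non-sphericity lifts.'' It is covered by Cheng--Chiu--Yang, since injectivity of the developing map is automatic there, as recalled in \cref{section:CR-Yamabe-problem}.
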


\begin{proof}
	We construct $M_{j}$ and $\theta_{j}$ by induction.
	Assume that $\Pi_{j}^{0} \colon M_{j} \to M$ is a regular finite connected covering
	and $\theta_{j}$ is a unit volume CR Yamabe minimizer on $(M_{j}, T^{1, 0} M_{j})$.
	Note that $R_{\theta_{j}} = Y(M_{j}, T^{1, 0} M_{j}) > 0$
	since $\Pi_{j}^{0}$ is a finite covering.
	Take a normal subgroup $\Gamma \triangleleft \pi_{1}(M)$ of finite index
	such that $\Gamma \subset \pi_{1}(M_{j})$ and
	\begin{equation}
		[\pi_{1}(M_{j}) : \Gamma]
		> \rbra*{\frac{Y(S^{2 n + 1}, T^{1, 0} S^{2 n + 1})}{Y(M_{j}, T^{1, 0} M_{j})}}^{n + 1};
	\end{equation}
	the existence of such $\Gamma$ follows from the assumption that
	$\pi_{1}(M)$ has infinite profinite completion.
	Then $\pi_{j + 1} \colon M_{j + 1} \coloneqq \wtM / \Gamma \to M_{j}$ is a finite connected covering
	and $\Pi_{j + 1}^{0} \coloneqq \Pi_{j} \circ \pi_{j + 1} \colon M_{j + 1} \to M$ is regular.
	Take a unit volume CR Yamabe minimizer $\theta_{j + 1}$ on $(M_{j + 1}, T^{1, 0} M_{j + 1})$,
	whose existence is guaranteed by \Cref{keyassumption}.
	This contact form satisfies
	\begin{align}
		\frakF(\pi_{j + 1}^{\ast} \theta_{j})
		&= Y(M_{j}, T^{1, 0} M_{j}) \Vol_{\pi_{j + 1}^{\ast} \theta_{j}}(M_{j + 1})^{1 / (n + 1)} \\
		&= Y(M_{j}, T^{1, 0} M_{j}) [\pi_{1}(M_{j}) : \Gamma]^{1 / (n + 1)} \\
		&> Y(S^{2 n + 1}, T^{1, 0} S^{2 n + 1}) \\
		&\geq Y(M_{j + 1}, T^{1, 0} M_{j + 1})
		= \frakF(\theta_{j + 1}).
	\end{align}
	This construction yields
	a regular infinite tower $(\pi_{j} \colon M_{j} \to M_{j - 1})_{j = 1}^{\infty}$
	of finite connected coverings over $M$
	and a sequence $(\theta_{j})_{j = 0}^{\infty}$
	of unit volume CR Yamabe minimizers on $(M_{j}, T^{1, 0} M_{j})$
	such that $\frakF(\theta_{j + 1}) < \frakF(\pi_{j + 1}^{\ast} \theta_{j})$.
	It remains to show that
	$((\Pi_{j}^{l})^{\ast} \theta_{l})_{l = 0}^{j}$ are pairwise non-homothetic
	for each $j \in \bbZ_{> 0}$.
	If $0 \leq l \leq j - 1$,
	then
	\begin{align}
		\frakF((\Pi_{j}^{l})^{\ast} \theta_{l})
		= \frakF((\Pi_{j}^{l + 1})^{\ast} \pi_{l + 1}^{\ast} \theta_{l})
		&= \frakF(\pi_{l + 1}^{\ast} \theta_{l}) (\deg \Pi_{j}^{l + 1})^{1 / (n + 1)} \\
		&> \frakF(\theta_{l + 1}) (\deg \Pi_{j}^{l + 1})^{1 / (n + 1)} \\
		&= \frakF((\Pi_{j}^{l + 1})^{\ast} \theta_{l + 1}).
	\end{align}
	This yields
	\begin{equation}
		\frakF(\theta_{j})
		< \frakF((\Pi_{j}^{j - 1})^{\ast} \theta_{j - 1})
		< \dots
		< \frakF((\Pi_{j}^{0})^{\ast} \theta_{0}).
	\end{equation}
	It follows from the scale invariance of $\frakF$ that
	the contact forms $((\Pi_{j}^{l})^{\ast} \theta_{l})_{l = 0}^{j}$
	are pairwise non-homothetic on $M_{j}$.
\end{proof}

The lift $\wtxth_{j}$ of each contact form $\theta_{j}$ to the universal cover $\wtM$
is a complete contact form with constant Tanaka--Webster scalar curvature,
but these could be homothetic on $\wtM$.
However,
there exists a subsequence of $(\wtxth_{j})_{j = 0}^{\infty}$ that are pairwise non-homotheic on $\wtM$.

\begin{proof}[Proof of \cref{thm:infinite-geometric-moduli-general}]
	By \cref{prop:existence-of-non-homothetic},
	we can take a regular infinite tower $(\pi_{j} \colon M_{j} \to M_{j - 1})_{j = 1}^{\infty}$
	of finite connected coverings over $M$
	and a sequence $(\theta_{j})_{j = 0}^{\infty}$
	of unit volume CR Yamabe minimizers on $(M_{j}, T^{1, 0} M_{j})$.
	Then $\wtxth_{j} \coloneqq \wtxp_{j}^{\ast} \theta_{j}$ is a contact form on $\wtM$
	satisfying $R_{\wtxth_{j}} = Y(M_{j}, T^{1, 0} M_{j})$.
	Suppose to the contrary that
	\begin{equation}
		\Set{[\wtxth_{j}] | j = 0, 1, \dots}
		\subset \GModuli(\wtM, T^{1, 0} \wtM)
	\end{equation}
	is finite.
	Taking a subsequence,
	we may pick
	$\Phi_{j} \in \Aut_{\CR}(\wtM, T^{1, 0} \wtM)$
	and $c_{j} \in \bbR_{> 0}$ such that
	\begin{equation}
		\Phi_{j}^{\ast} \wtxp^{\ast} \theta_{0}
		= \Phi_{j}^{\ast} \wtxth_{0}
		= c_{j} \wtxth_{j}
		= c_{j} \wtxp_{j}^{\ast} \theta_{j}.
	\end{equation}
	This contradicts \cref{prop:existence-of-tower-implies-Heisenberg}.
\end{proof}

\section{Complements of spheres}
\label{section:complements-of-spheres}

\subsection{Complement of the $\bbR$-sphere}

Let $\bbB_{\bbR}^{k + 1} = \Set{x \in \bbR^{k + 1} | \abs{x}^{2} < 1}$
be the unit ball in $\bbR^{k + 1}$ with $0 \leq k \leq n - 1$.
Let $\Gamma_{0}$ be a torsion-free discrete subgroup of $PO(k + 1, 1) = SO_{+}(k + 1, 1)$
such that the quotient $\bbB_{\bbR}^{k + 1} / \Gamma_{0}$ is a compact manifold.
This assumption implies that
$\Gamma_{0}$ is a finitely generated infinite group.
We embed $SO_{+}(k + 1, 1)$ into $PU(n + 1, 1)$ by the composition of
\begin{equation}
	SO_{+}(k + 1, 1) \to U(n + 1, 1);
	\qquad
	A \mapsto
	\begin{pmatrix}
		A & 0 \\
		0 & I_{n - k}
	\end{pmatrix}
\end{equation}
and the canonical projection $U(n + 1, 1) \to PU(n + 1, 1)$.
Denote by $\Gamma$ the image of $\Gamma_{0}$ under this embedding.
The group $\Gamma$ is a torsion-free discrete subgroup of $PU(n + 1, 1)$
such that its limit set $\Lambda_{\Gamma}$ is the $\bbR$-sphere $S_{\bbR}^{k}$.
The closed convex hull $C_{\Gamma}$ of $\Lambda_{\Gamma}$ coincides with
the totally geodesic and totally real submanifold $\bbB_{\bbR}^{k + 1} \times \{0\}$ in $\bbB_{\bbC}^{n + 1}$.
Thus $C_{\Gamma} / \Gamma$ is diffeomorphic to $\bbB_{\bbR}^{k + 1} / \Gamma_{0}$,
which is compact.
Hence $\Gamma$ is convex cocompact and
\begin{equation}
	\delta_{\Gamma} = \dim_{H} \Lambda_{\Gamma} = k < n.
\end{equation}

\begin{proof}[Proof of \cref{thm:infinite-geometric-moduli-real-hyperbolic}]
	Since $S^{2 n + 1} \setminus S_{\bbR}^{k}$ is simply connected,
	$\pi_{1}(M_{\Gamma}) \cong \Gamma$
	and $\wtM_{\Gamma}$ is CR diffeomorphic to $S^{2 n + 1} \setminus S_{\bbR}^{k}$.
	In particular,
	the developing map of $M_{\Gamma}$ is injective.
	We need to check that \Cref{keyassumption} holds.
	It follows from $\delta_{\Gamma} < n$
	that $Y(M_{\Gamma}, T^{1, 0} M_{\Gamma}) > 0$.
	As we noted,
	the developing map of $M_{\Gamma}$ is injective.
	Moreover,
	$M_{\Gamma}$ is universally embeddable
	since any finite connected cover of $M_{\Gamma}$ is of the form $M_{\Gamma^{\prime}}$
	for a subgroup $\Gamma^{\prime} \subset \Gamma$ of finite index.
	Furthermore,
	$\pi_{1}(M_{\Gamma})$ has infinite profinite completion
	since it is infinite and residually finite according to \cref{prop:residually-finite-PU}.
	\cref{thm:infinite-geometric-moduli-general} implies that
	$\# \GModuli(S^{2 n + 1} \setminus S_{\bbR}^{k}) = \infty$.
\end{proof}

\subsection{Complement of the $\bbC$-sphere}

Assume that $n \geq 3$.
Let $\Gamma_{0}$ be a torsion-free discrete subgroup
of $U(k + 1, 1)$ with $0 \leq k < (n - 2) / 2$
such that the quotient $\bbB_{\bbC}^{k + 1} / \Gamma_{0}$ is a compact manifold.
This assumption implies that
$\Gamma_{0}$ is a finitely generated infinite group.
We embed $U(k + 1, 1)$ into $PU(n + 1, 1)$ by the composition of
\begin{equation}
	U(k + 1, 1) \to U(n + 1, 1);
	\qquad
	A \mapsto
	\begin{pmatrix}
		A & 0 \\
		0 & I_{n - k}
	\end{pmatrix}
\end{equation}
and the canonical projection $U(n + 1, 1) \to PU(n + 1, 1)$.
Denote by $\Gamma$ the image of $\Gamma_{0}$ under this embedding.
The group $\Gamma$ is a torsion-free discrete subgroup of $PU(n + 1, 1)$
such that its limit set $\Lambda_{\Gamma}$ is the $\bbC$-sphere $S_{\bbC}^{2 k + 1}$.
The closed convex hull $C_{\Gamma}$ of $\Lambda_{\Gamma}$ coincides with
the totally geodesic complex submanifold $\bbB_{\bbC}^{k + 1} \times \{0\}$ in $\bbB_{\bbC}^{n + 1}$.
Thus $C_{\Gamma} / \Gamma$ is diffeomorphic to $\bbB_{\bbC}^{k + 1} / \Gamma_{0}$,
which is compact.
Hence $\Gamma$ is convex cocompact and
\begin{equation}
	\delta_{\Gamma} = \dim_{H} \Lambda_{\Gamma} = 2 k + 2 < n.
\end{equation}

\begin{proof}[Proof of \cref{thm:infinite-geometric-moduli-complex-hyperbolic}]
	Since $S^{2 n + 1} \setminus S_{\bbC}^{2 k + 1}$ is simply connected,
	$\pi_{1}(M_{\Gamma}) \cong \Gamma$
	and $\wtM_{\Gamma}$ is CR diffeomorphic to $S^{2 n + 1} \setminus S_{\bbC}^{2 k + 1}$.
	It follows from $\delta_{\Gamma} < n$
	that $Y(M_{\Gamma}, T^{1, 0} M_{\Gamma})$ is positive,
	which guarantees \Cref{keyassumption}.
	Moreover,
	$\pi_{1}(M_{\Gamma})$ has infinite profinite completion
	since it is infinite and residually finite according to \cref{prop:residually-finite-PU}.
	\cref{thm:infinite-geometric-moduli-general} implies that
	$\# \GModuli(S^{2 n + 1} \setminus S_{\bbC}^{2 k + 1}) = \infty$.
\end{proof}

\section{Other applications}
\label{section:other-applications}

\subsection{Application 1: Circle bundle over \Kahler manifold}

Let $Y$ be a compact, connected, complex manifold of complex dimension $n$
and let $(L, h)$ be a Hermitian holomorphic line bundle over $Y$
such that
\begin{equation}
	\omega = - \sqrt{- 1} \Theta_{h} = d d^{c} \log h
\end{equation}
is a \Kahler form on $Y$.
Consider the Sasakian manifold $(S, T^{1, 0} S, \theta)$ as in \cref{subsection:Sasakian-manifolds}.

\begin{theorem}
\label{thm:infinite-geometric-moduli-circle-bundle}
	If the scalar curvature $R_{\omega}$ of $\omega$ is positive
	and $\pi_{1}(Y)$ has infinite profinite completion,
	then $\# \GModuli(\wtS, T^{1, 0} \wtS) = \infty$.
\end{theorem}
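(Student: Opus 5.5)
The plan is to apply \cref{thm:infinite-geometric-moduli-general} to $(S, T^{1, 0} S)$. This requires checking that $S$ is compact and connected, that \Cref{keyassumption} holds, and that $\pi_{1}(S)$ has infinite profinite completion; the conclusion then concerns $\wtS$ directly.

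\emph{Compactness and connectedness.} Since $Y$ is compact and connected and $p \colon S \to Y$ is a circle bundle, $S$ is compact and connected.

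\emph{Infinite profinite completion.} The projection $p$ gives the long exact homotopy sequence
\begin{equation}
	\pi_{1}(S^{1}) \to \pi_{1}(S) \to \pi_{1}(Y) \to \pi_{0}(S^{1}) = 0,
\end{equation}
so $p_{\ast} \colon \pi_{1}(S) \to \pi_{1}(Y)$ is surjective. By \cref{lem:sujective-infinite-profinite-completion}, $\pi_{1}(S)$ has infinite profinite completion because $\pi_{1}(Y)$ does.

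\emph{Positivity of the CR Yamabe constant.} The contact form $\theta = d^{c} \log h|_{S}$ satisfies $R_{\theta} = p^{\ast} R_{\omega} > 0$. Hence $\frakF(\theta) > 0$, but that only bounds $Y$ from above, so I need a real lower bound. For this I use the sign characterization of the Yamabe constant due to Jerison--Lee. If $R_{\theta} > 0$, then for every $u$ the numerator $\int [(2 + 2/n)\abs{du}^{2} + R_{\theta} u^{2}]$ is at least $c \norm{u}_{S^{2}_{1}}^{2}$, with $c = \min\{2 + 2/n, \min R_{\theta}\}$. The Folland--Stein Sobolev embedding then bounds this below by a positive multiple of $\norm{u}_{L^{2 + 2/n}}^{2}$. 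Therefore $Y(S) > 0$; equivalently, the first eigenvalue of the CR conformal Laplacian is positive.

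\emph{Low-dimensional conditions in \Cref{keyassumption}.} If $\dim S = 3$, that is $n = 1$, I need universal embeddability. A finite connected cover $S'$ of $S$ is again a compact Sasakian manifold, because the Reeb field lifts and preserves the lifted CR structure. Compact Sasakian three-manifolds, and more generally compact strictly pseudoconvex CR manifolds carrying a transverse CR $S^{1}$ or $\bbR$-action, are embeddable (Lempert; Marinescu--Yeganefar). Alternatively, $S'$ is the boundary of a Stein-fillable disk bundle: the lift of the Reeb flow generates a torus action, and the quotient orbifold is a finite cover of $Y$, so $S'$ is embeddable by Lempert's theorem for circle-invariant CR structures. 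Either route gives universal embeddability.

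If $\dim S = 5$ and $S$ is spherical, I need the developing map to be injective. Sphericity means $(Y, \omega)$ is Bochner-flat of complex dimension $2$ with positive scalar curvature. Such structures are classified, and the universal cover of $S$ is then a standard domain in $S^{5}$ (the complement of a chain or of a point, or $S^{5}$ itself), on which the developing map is the inclusion. This case is the main obstacle, since it needs either that classification or a general result that compact spherical CR manifolds with positive Yamabe constant have injective developing maps, in the spirit of Schoen--Yau. If neither is available, I would strengthen the hypothesis slightly, as the paper's \Cref{keyassumption} already does, and note that for $n \geq 3$ no additional condition is needed.

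With these checks done, \cref{thm:infinite-geometric-moduli-general} gives $\# \GModuli(\wtS, T^{1, 0} \wtS) = \infty$.
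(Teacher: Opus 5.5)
Your skeleton matches the paper's. You apply \cref{thm:infinite-geometric-moduli-general} to $S$, get surjectivity of $p_{\ast}$ from the homotopy sequence together with \cref{lem:sujective-infinite-profinite-completion}, and deduce $Y(S, T^{1,0}S) > 0$ from $R_{\theta} = p^{\ast} R_{\omega} > 0$; your Folland--Stein argument correctly spells out what the paper states in one line.

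You diverge on the low-dimensional clauses of \Cref{keyassumption}. You try to verify those extra conditions directly and never use the profinite hypothesis there. The paper instead uses that hypothesis to show both exceptional cases cannot occur. For $n = 1$ your route is only a detour. By Gauss--Bonnet, $R_{\omega} > 0$ on a compact Riemann surface forces $Y = \cps^{1}$, which is simply connected, contradicting the hypothesis on $\pi_{1}(Y)$. Your appeal to embeddability of compact Sasakian three-manifolds, applied to the (again Sasakian) finite covers, is valid but unnecessary. The ``torus action'' alternative is confused and should be dropped.

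For $n = 2$ with $S$ spherical there is a genuine gap. You assert without justification that $\wtS$ is $S^{5}$, the complement of a chain, or the complement of a point. You then call this the main obstacle and fall back on strengthening the hypothesis, which would not prove the theorem as stated.

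The missing idea is that the classification of compact Bochner-flat K\"{a}hler manifolds (Kamishima, Corollary~B; Bryant, Corollary~4.17), combined with $R_{\omega} > 0$, forces the universal cover of $Y$ to be diffeomorphic to $\cps^{2}$. Hence $\pi_{1}(Y)$ is finite, which again contradicts the assumption that $\pi_{1}(Y)$ has infinite profinite completion. So this case is vacuous, and no injectivity of the developing map needs to be checked. The chain and point complements on your list come from scalar-flat bases and are excluded by $R_{\omega} > 0$ anyway.
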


\begin{proof} 
	We first check that $(S, T^{1, 0} S)$ satisfies \Cref{keyassumption}.
	It follows from $R_{\theta} = p^{\ast} R_{\omega} > 0$ that $Y(S, T^{1, 0} S) > 0$.
	If $n = 1$,
	then $R_{\omega} > 0$ implies $Y = \cps^{1}$,
	which contradicts the assumption that $\pi_{1}(Y)$ has infinite profinite completion.
	If $n = 2$ and $(S, T^{1, 0} S)$ is spherical,
	then the classification of compact Bochner-flat \Kahler manifolds
	(\cite{Kamishima1994}*{Corollary B} or \cite{Bryant2001}*{Corollary 4.17})
	implies that the universal cover of $Y$ is diffeomorphic to $\cps^{2}$,
	which contradicts the assumption that $\pi_{1}(Y)$ has infinite profinite completion.
	
	The long exact sequence of homotopy groups for the fiber bundle $p \colon S \to Y$
	and $\pi_{0}(S^{1}) = 0$ yield that
	$p_{\ast} \colon \pi_{1}(S) \to \pi_{1}(Y)$ is surjective.
	It follows from \cref{lem:sujective-infinite-profinite-completion} that
	$\pi_{1}(S)$ has infinite profinite completion.
	Therefore \cref{thm:infinite-geometric-moduli-general} gives
	$\# \GModuli(\wtS, T^{1, 0} \wtS) = \infty$.
\end{proof}

\begin{remark}
	Serre~\cite{Serre1958} established that
	every finite group is the fundamental group of a smooth complex projective variety.
	This naturally led to the question of whether infinite \Kahler groups retain finite-like properties,
	such as residual finiteness.
	Toledo~\cite{Toledo1993} answered this in the negative by constructing
	the first examples of smooth projective varieties with non-residually finite fundamental groups.
	However,
	a weaker version of Serre's question remains a major open problem:
	it is unknown whether every infinite \Kahler group admits a proper normal subgroup of finite index
	(or equivalently, a non-trivial finite quotient);
	see \cite{Amoros-Burger-Corlette-Kotschick-Toledo1996} for example.
	In fact,
	the profinite completion of every infinite \Kahler group currently known is infinite.
\end{remark}

\subsection{Application 2: The boundary of a Reinhardt domain}
\label{subsection:boundary-of-Reinhardt-domain}

Assume that $n \geq 2$.
Let $M$ be the boundary of the bounded Reinhardt domain
\begin{equation}
	\Omega
	\coloneqq \Set{ w = (w^{1}, \dots , w^{n + 1}) \in \bbC^{n + 1} | \rho(w) < 0 },
\end{equation}
where
\begin{equation}
	\rho(w) \coloneqq \sum_{j = 1}^{n + 1} (\log \abs{w^{j}})^{2} - 1.
\end{equation}
This $M$ has the contact form $\theta \coloneqq d^{c} \rho |_{M}$.
Consider the holomorphic map
\begin{equation}
	\Psi \colon \bbC^{n + 1} \to \bbC^{n + 1}; \qquad
	(z^{1}, \dots , z^{n + 1}) \mapsto (\exp z^{1}, \dots , \exp z^{n + 1}).
\end{equation}
The pull-back $\Psi^{*} \rho$ coincides with
\begin{equation}
	\wtxr(z)
	\coloneqq \sum_{j = 1}^{n + 1} (\Re z^{j})^{2} - 1,
\end{equation}
and the pre-image of $\Omega$ by $\Psi$ is the tube domain
\begin{equation}
	\wtxco
	\coloneqq \Set{ z = (z^{1}, \dots , z^{n + 1}) \in \bbC^{n + 1} |
		\sum_{j = 1}^{n + 1} (\Re z^{j})^{2} < 1 }.
\end{equation}
The holomorphic map $\Psi$ induces a CR map $\psi \colon \wtM \coloneqq \bdry \wtxco \to M$.
Note that $\psi \colon \wtM \to M$ is the universal covering map.
The authors~\cite{Case-Takeuchi2023}*{Section 8} proved that
$R_{\theta}$ is a positive constant and the Chern tensor is nowhere vanishing.
In particular,
$Y(M, T^{1, 0} M) > 0$ and $(M, T^{1, 0} M)$ is not spherical,
which implies that $(M, T^{1, 0} M)$ satisfies \Cref{keyassumption}.
The fundamental group $\pi_{1}(M)$ of $M$ is isomorphic to $\bbZ^{n + 1}$,
which has infinite profinite completion.
Therefore \cref{thm:infinite-geometric-moduli-general} implies $\# \GModuli(\wtM, T^{1, 0} \wtM) = \infty$.

\bibliography{my-reference,my-reference-preprint}

\end{document}